\theoremstyle{plain}
\newtheorem{theorem}{Theorem}[section]
\newtheorem{lemma}[theorem]{Lemma}
\newtheorem{corollary}[theorem]{Corollary}
\newtheorem{example}[theorem]{Example}
\theoremstyle{definition}
\newtheorem{definition}[theorem]{Definition}
\theoremstyle{remark}
\newtheorem{claim}{Claim}
\newtheorem{con}[theorem]{Construction}
\newtheorem{remark}[theorem]{Remark}
 \DeclareMathOperator{\cod}{cod}
\newcommand{\QED}{\ifhmode\unskip\nobreak\fi\quad {\rm Q.E.D.}} 
\newcommand\Span[1]{\langle{#1}\rangle}
\newcommand{\C}{\mathbb{C}}
\newcommand{\I}{\mathcal{I}}
\renewcommand{\L}{\mathcal{L}}
\renewcommand{\O}{\mathcal{O}}
\renewcommand{\o}{\mathcal{O}}
\renewcommand{\P}{\mathbb{P}}
\newcommand{\p}{\mathbb{P}}
\newcommand{\Z}{\mathbb{Z}}
\newcommand{\rat}{\dasharrow}
\newcommand{\dJen}{de Jonqui\'eres\ }
\begin{document}

\title{Equivalent birational embedding IV: reduced  varieties}

\author{
Massimiliano Mella}
\address{
  Dipartimento di Matematica\\ Universit\`a di Ferrara\\
  Via Machiavelli 30, 44121 Ferrara
 Italia}
\email{mll@unife.it} 

\date{November 2021}
\subjclass{Primary 14J70 ; Secondary 14N05, 14E05}
\keywords{Cremona equivalence, reducible subvarieties, birational maps}
\thanks{The author is a member of GNSAGA}
\maketitle
\begin{abstract} Two reduced projective schemes are said to be
  Cremona equivalent if there is a Cremona map that maps one in the
  other. In this paper I revise some of the known results about
  Cremona equivalence and extend the main result of \cite{MP09} to
  reducible schemes. This allows to prove a very
  general contractibility result for union of rational subvarieties.
\end{abstract}

\section{Introduction}
The birational geometry of the projective space has always attracted the
attention of algebraic geometers.  The Cremona group, $Cr(\p^r_k)$, that is the group
of birational selfmaps of the projective space, has been intensively
studied for well over a century but it is still a quite
mysterious object.
Here is an extract from the article ``Cremona group'' in the
Encyclopedia of Mathematics, written by V. Iskovskikh in 1982:

{\vspace{0,5cm}
\it One of the most difficult problems in birational geometry is that of
describing the structure of the group $Cr(\p^3_k)$, which is no longer
generated by the quadratic transformations. Almost all literature on
Cremona transformations of three-dimensional space is devoted to
concrete examples of such transformations. Finally, practically
nothing is known about the structure of the Cremona group for spaces
of dimension higher than 3.}\cite{Is87b}.
\vspace{0,5cm}

Unfortunately after  $40$ years the situation is not much
better. A reasonable set of generators is not known yet. The Cremona groups have been proved to be non simple,
\cite{CL13} \cite{BLZ19}, and their behavior is wild from many points of view, as an
example one can look at the results in \cite{BLZ19}. Already the two
dimensional Cremona group has  many foundational problems that are far
from being  solved, see
 \cite{Ca15} for a very nice introduction.
Instead of trying to tame this group, in this paper I want to use its
wildness  to address the 
following question.

\vspace{.5cm}
{\it Question}: Let $X,Y\subset \p^r$ be birational reduced schemes is
there a birational selfmap of the
projective space $\omega:\p^r\dasharrow\p^r$ such that
$\omega(X)=Y$?
\vspace{.5cm}
 
When the answer to the question is positive $X$ and $Y$ are said to be
Cremona equivalent.
The notion of Cremona equivalence is quite old and already at the end
of $\rm XIX^{th}$ century both Italian and English school of algebraic
geometry approached the problem, with special regards to plane
curves, \cite{CE00} \cite{Ma07}  \cite{HU27} \cite{Co59}.   
The first result I am aware off in modern times, is due to Jelonek, \cite{Je87} where
the author proved that two irreducible and reduced birational subvarieties of the
complex projective space are Cremona equivalent when roughly  the
dimension is smaller than the codimension. More recently I have been attracted by the problem and
dedicated a series of papers to explore the possibility to extend
birational maps of projective varieties to the ambient space. The first
important improvement has been achieved in \cite{MP09}, see also
\cite{CCMRZ16} for an alternative proof, where it is proven that two
irreducible and reduced birational projective
varieties of codimension at least 2 are Cremona equivalent.

It is not difficult to see that the result is sharp with respect to
the codimension. It was classically known the existence of non Cremona
equivalent rational  plane curves, see for instance
Example~\ref{exa:sextic}.
Then stemming from the mentioned result in \cite{MP09} there are
two possible directions: study the Cremona equivalence of divisors,
extend the result to reducible and reduced projective varieties.
The case of divisors has been fruitfully studied.  In \cite{MP12} and
\cite{CC10} the authors completely described the Cremona equivalence
classes of irreducible curves and gave conditions for
a plane curve to be of minimal Cremona degree, that is with the
smallest degree in the Cremona equivalence class. Partial results have
also been obtained for special classes of divisors, \cite{Me13}, and
special classes of rational surface, \cite{Me20} \cite{Me21}.

To the best of my knowledge the only cases of reducible varieties
studied in relation to the  Cremona equivalence
 are those concerning the
contractibility of set of lines in the projective plane, \cite{CE00}  \cite{CC17} \cite{CC18} \cite{Du19}.
Even for this very special class of reducible varieties the answer
is really complicate and it is not known yet a complete classification
of contractible set of lines in the plane. The problem resting on the
different possible configuration of intersection points. Note further
that essentially nothing is known about the Cremona equivalence class
of non contractible set of lines in the plane. That is given two
configurations of lines in the plane nothing is known about their
Cremona equivalence.

In this paper I want to address the case of reduced schemes of
codimension at least $2$. I was really amazed when I realized that
also for this class  it is possible to extend the result of
irreducible subvarieties and give a complete answer to the question.

\begin{theorem}\label{th:main}
  Let $X,Y\subset\p^r$ be two reduced
  schemes of dimension at most $r-2$. Then $X$ is Cremona equivalent to $Y$ if and only if
  $X$ and $Y$ are birational. 
\end{theorem}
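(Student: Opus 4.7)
The \emph{only if} direction follows from the general fact that a Cremona map restricts to a birational isomorphism on a dense open subset of any reduced subscheme not contained in its indeterminacy locus. For the converse, since $X$ and $Y$ are birational reduced schemes, they have the same number of irreducible components, and the birational equivalence yields (after relabeling) a matching $X_1, \ldots, X_s$ and $Y_1, \ldots, Y_s$ with $X_i$ birational to $Y_i$, each pair of common dimension $d_i \leq r-2$. My plan is to proceed by induction on $s$, the base case $s = 1$ being exactly the main theorem of \cite{MP09}.

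For the inductive step, write $X = X_0 \cup X'$ and $Y = Y_0 \cup Y'$, with $X_0, Y_0$ irreducible (and birational to each other) and $X', Y'$ having $s - 1$ components, paired birationally. Apply \cite{MP09} to find $\omega \in \Bir(\p^r)$ with $\omega(X_0) = Y_0$. Then $\omega(X) = Y_0 \cup \omega(X')$, and $\omega(X')$ is componentwise birational to $Y'$, so the problem reduces to showing that $Y_0 \cup \omega(X')$ is Cremona equivalent to $Y_0 \cup Y'$. Directly applying the inductive hypothesis to $\omega(X')$ and $Y'$ produces a Cremona $\psi$ sending $\omega(X')$ to $Y'$, but typically $\psi(Y_0) \neq Y_0$, so the already-matched component is destroyed and the naive induction fails.

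The central obstacle is thus to construct a Cremona map that matches some components while preserving a prescribed one. I would address this by strengthening the induction to a \emph{relative} form: given an auxiliary reduced scheme $W \subset \p^r$ of dimension at most $r - 2$, two birational reduced schemes $X, Y$ of dimension at most $r - 2$ admit a Cremona equivalence realized by a map $\omega$ with $\omega(W) = W$ setwise. To prove this relative form, one would first use Cremona automorphisms fixing $W$ setwise---a rich subgroup, since $W$ has codimension $\geq 2$---to put $\omega(X')$ and $Y'$ in very general position with respect to $W$; then revisit the construction of \cite{MP09} in this context, arranging the base loci of the defining linear systems to avoid $W$ so that the intermediate Cremona maps restrict to the identity on a neighborhood of $W$. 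The main technical difficulty, and the foreseeable heart of the argument, is precisely this relative version of \cite{MP09}: verifying that $\Bir(\p^r)$ has enough flexibility to carry out the irreducible-case construction while fixing an arbitrary codimension-$\geq 2$ subscheme setwise.
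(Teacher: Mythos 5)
There is a genuine gap, and it sits exactly where you place it: the ``relative version of \cite{MP09}'' that your induction on the number of components requires is never proved, and it is the entire mathematical content of the statement. Worse, the mechanism you sketch for obtaining it cannot work as described. You propose to arrange the base loci of the intermediate Cremona maps to avoid the auxiliary scheme $W$ ``so that the intermediate Cremona maps restrict to the identity on a neighborhood of $W$.'' A birational selfmap of $\p^r$ that is the identity on a dense open subset of $\p^r$ is the identity; what avoiding $W$ with the base locus actually buys is only that the map is an \emph{isomorphism} near the generic points of $W$, so it still moves $W$ to some other subvariety birational to it. The maps underlying the irreducible-case proof are double projections from monoid vertices (\dJen maps), and these genuinely displace every subvariety not specially related to their centers; there is no indication that the construction can be rigged to fix a prescribed codimension-two scheme setwise, and the subgroup of $Cr_r$ stabilizing an arbitrary $W$ is not known to be ``rich'' in the way your argument needs. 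A second, smaller gap: after applying the irreducible case to $X_0$ you assert that $\omega(X')$ is componentwise birational to $Y'$, but the map produced for $X_0$ is only guaranteed to be an isomorphism at the generic points of $X_0$; a priori it may contract or blow up components of $X'$, so even the first reduction step requires an argument.

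For comparison, the paper does not induct on components and never needs a stabilized subscheme. It realizes $X$ and $Y$ as projections of a single embedding of a common smooth model $Z$ into $\p^N$ and runs the chain-of-double-projections argument of \cite{CCMRZ16} on the \emph{whole} reduced scheme at once, exchanging one coordinate function at a time. The reducibility is absorbed into three technical refinements: a dimension count for the linear system of monoids through a reducible scheme (Lemma~\ref{lem:moniddim}); the existence of an \emph{irreducible} monoid through $Z$ with prescribed vertices that contains none of the cones $C_p(Z_j)$ over the individual components, obtained component by component and then by taking general members of a larger linear system (Lemmas~\ref{lem:mp16_1} and~\ref{lem:mp16}); and a direct birationality argument for projections from general lines (Claim~\ref{sub:claim1}) replacing the Segre-locus theorem of \cite{CC01}, which fails for reducible schemes. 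If you want to salvage your strategy you would have to prove the relative statement from scratch, which appears at least as hard as the theorem itself; the simultaneous treatment is what makes the problem tractable.
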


To appreciate the result and the amazing flexibility of the Cremona
group just think of a bunch of lines $L=\cup_1^s l_i\subset\p^r$, for $n\geq 3$. Then
there is a birational map $\omega:\p^r\dasharrow \p^r$ mapping $L$ to
a set of $s$ lines passing through a fixed point $p$. Hence any set of
lines in $\p^n$ is contractible by a birational map as soon as $n\geq
3$.
No matter how the irreducible components of $L$ intersects we can
always contract them to a set of $s$ points with a birational selfmap of $\p^r$. As an
application of Theorem~\ref{th:main} we'll prove a similar statement for an
arbitrary set of rational varieties.

Despite the proof of the theorem is constructive and algorithmic, it is difficult if
not impossible to produce a birational map  that realizes the Cremona
equivalence of two prescribed subvarieties. This is due to the fact that to follow the proof's steps
one has to produce irreducible monoids with special features and those are
difficult to be computed on effective examples.
Coming back to the wildness of $Cr(\P^r_k)$ the positive answer to the
Cremona equivalence question for arbitrary subvarieties of
codimension at least $2$ can be seen as a further confirmation of
the difficulty to describe and tame this incredible group of
transformations, see also Remark~\ref{rem:wild}.

The paper is organized as follows. First I  introduce a set of special
Cremona birational maps and use them to study the Cremona equivalence
of special classes of varieties, in particular an explicit
construction of the Cremona equivalence of sets of reduced points is
given.
Even if this part is not strictly necessary
to prove the main result I think it allows to perceive the beauty of
Cremona modification and it is also a nice training camp on the birational
geometry of projective subvarieties of the projective space.
The proof of the  main Theorem is then finished in the last section. To do it  I adapt
the
proof in \cite{CCMRZ16} to the case of reducible varieties. This is
done improving the computation of the dimension  of monoids containing a
subvariety, see Lemma~\ref{lem:mp16_1}, and avoiding the use of  the
results in \cite{CC01} on the Segre locus to produce the chain of
double projection needed to complete the argument.
Finally as an application it is proven that any set of reduced
codimension at least two rational varieties can be contracted by a
Cremona transformation.

Many thanks are due to the referee for a careful reading and for
suggesting an improvement to Lemma~\ref{lem:moniddim}.

\section{Basics on Cremona transformations} 
We work over the complex field.
\begin{definition}\label{def:Cremona}
  A \emph{Cremona transformation} is a birational map
  $\varphi:\p^r\rat\p^r$ given by equations
 \begin{equation*}
   \label{eq:1}
 [x_0,\ldots,x_r]\mapsto [F_0(x_0,\ldots,x_r),\ldots, 
 F_r(x_0,\ldots,x_r)],
\end{equation*}
where $F_i(x_0,\ldots,x_r)$ are homogeneous
polynomials of the same degree $\delta>0$, for $i=0,\ldots,r$.

 The
inverse map is also a Cremona transformation, and it is defined by
homogeneous polynomials $G_i(x_0,\ldots,x_r)$ of degree $\delta'>0$,
for $i=0,\ldots, r$.
If the polynomials $\{F_i\}$  are coprime and we choose the $\{G_j\}$
as well coprime  we say that
$\varphi$ is a \emph{$(\delta, \delta')$--Cremona transformation}.

The subscheme 
$${\rm Ind}(\varphi):=\cap^r_{i=0}(F_i(x_0,\ldots,x_r)=0)$$
is the {\em indeterminacy locus} of $\varphi$.  Since the
composition of $\varphi$ and its inverse is the identity, we have
\[G_i(F_0(x_0,\ldots,x_r), \ldots, F_r(x_0,\ldots,x_r))=\Phi\cdot
  x_i,\ \ \mbox{for $i=0,\ldots, r$}\]
where $\Phi$ is a
homogeneous polynomial of degree $\delta\cdot \delta'-1$. The
hypersurface ${\rm Fund}(\varphi):=\{\Phi=0\}$ is the
\emph{fundamental locus} of $\varphi$ and its support is the
\emph{reduced fundamental locus} ${\rm Fund}_{\rm red}(\varphi)$. 
The group of Cremona transformation of $\p^r$ is
$$Cr_r:=:Cr(\p^r):=\{\varphi:\p^r\rat\p^r| \mbox{ the map $\varphi$ is birational}\}. $$
\end{definition}
\begin{remark}
  Note that we are not asking for the polynomials $\{F_i\}$ to be
  coprime. This is quite unusual but useful to prove the main result,
  indeed this allow us to add fixed components to linear system to
  produce the birational maps we need. 
\end{remark}
Let us work out some special cases in details.

\begin{example}[Quadro-quadric transformation of $\p^r$]
  \label{exa:quadro-quadric} Let $Q\subset
  H\subset\P^r$ be a codimension $2$ reduced quadric and
  $p\in\p^r\setminus H$ a point. Consider the linear system
  $$\L=|\I_{p\cup Q}(2)|$$
  of quadrics through $p$ and $Q$. Then
  $\dim\L=n$ and the scheme theoretic base locus of $\L$ is $Q\cup
  p$.

  Let $\epsilon:Z\to\p^r$ be the blow up of $Q$ and $p$ with
  exceptional divisor $E_p$ and $E_Q$ and $\nu:Z\to\p^{r+1}$ the blow down of the
  hyperplane $H$ and of the cone $C_p(Q)$ with base  $Q$ and  vertex $p$. Then
  a general conic passing through $p$ and intersecting $Q$ in $2$
  points is mapped to a line. Therefore 
  $\phi:=\nu\circ\epsilon$ is a Cremona transformation.
  For a general
  hyperplane $H$ the restriction $\phi_{|H}$ maps $H$ to a quadric, then  the
  inverse of $\phi$ is again given by quadrics with an isomorphic base
  locus.
 This shows that $\phi$ is a quadro-quadric Cremona transformation.
  
  Note that for $n=2$ the map $\phi_\L$  is the standard quadratic
  Cremona transformation. Moreover for  a general linear space $\p^a\cong
  A\subset\p^r$ containing $p$, the restriction  $\phi_{\L|A}$ is
  again a quadro-quadric map $\p^a\dasharrow\p^a$.
  
Recall that Noether--Castelnuovo Theorem shows that $Cr_2$ is generated by the
linear automorphisms and the 
quadro-quadric transformation of $\p^2$.  Therefore if we consider a
plane $A\cong\p^2$ and any birational map $\omega:A\dasharrow A$ we
may write
$$\omega=g_1\circ\phi_1\circ\ldots\circ g_h\circ\phi_{h},$$
with $\phi_i$ quadro-quadric maps and $g_i$ linear automorphisms of $\p^2$.
If $A\subset\p^r$ we may extend both quadro-quadric maps and linear
automorphisms to selfmaps of the ambient space. Hence for any map
$\omega\in Cr(\p^2)$ there is a birational map $\Omega\in Cr(\p^r)$ such that
$\Omega_{|A}=\omega$, as birational maps.
\end{example}

\begin{example}[Cubo-cubic transformation of $\p^3$]
  \label{exa:cubo-cubic}
  Let $\Gamma\subset\p^3$ be a rational normal curve and $S_1,S_2\in
  |\I_\Gamma(3)|$ two smooth cubic surfaces containing $\Gamma$.  Then
  we have
  $S_1\cap S_2=\Gamma\cup R$, for a residual curve $R$ of degree $6$
  genus $3$. It is not difficult to check that
  $$\dim |\I_R(3)|=3.$$
  and $\Gamma\cdot R=8$, see for instance \cite{Ve05}.
  This shows that the linear system $|\I_R(3)|$ defines a Cremona
  transformation $\psi:\p^3\dasharrow\p^3$.
  that can be described as follows.
  Let $\epsilon:\Z\to\p^3$ be the blow up of $R$ and $\nu:Z\to\p^3$
  the blow down of the strict transform of trisecant lines to
  $R$. Then we have
  $$\psi=\nu\circ\epsilon.$$

  Since $\Gamma\cdot R=8$ we have  that  $\psi(\Gamma)$ is a line, moreover the
  restriction to a general plane $\psi_{|H}$ maps $H$ to $\p^2$ blown
  up in $6$ points, the intersection points with the curve $R$. 
  Therefore the inverse of $\phi$ is again defined by cubics and with
  a bit more of work one can prove that the base locus is of the same
  type.
  In particular $\phi$ is a cubo-cubic Cremona transformation
\end{example}

Next we introduce a class of special hypersurfaces that will be of
crucial importance in what follows.

\begin{definition}[Monoids] Let $X\subset \P^ r$ be a hypersurface of degree
$d$. We say that $X$
is a \emph{monoid} with \emph{vertex} $p\in \P^ r$ if $p$ is a point in $X$
of multiplicity exactly $d-1$. Note that a monoid can have more than one
vertex.  If we choose projective coordinates in such a way that
$p=[1,0,\ldots,0]$, then 
\begin{equation*}\label{eq:monoid}
  X=(F_{d-1}(x_1,\ldots,x_{r})x_{0}+
  F_{d}(x_1,\ldots,x_{r})=0),\end{equation*} 
where $F_{d-1}$ and $F_d$  are homogeneous polynomials of degree $d-1$
and $d$ respectively and $F_{d-1}$ is nonzero. The hypersurface $X$ is
irreducible if and only if  $F_{d-1}$ and $F_d$ are coprime. 
\end{definition}

\begin{con}
An irreducible monoid $X$ is rational. Indeed, the
projection of $X$ from a vertex $p$ onto a hyperplane $H$ not
passing through $p$ is a birational map $\pi\colon X\dasharrow H\cong \P^
{r-1}$. If $H$ has equation $x_0=0$, then the inverse map
$\pi^ {-1}\colon \P^ {r-1} \dasharrow X $ is given by 
\[
[x_1,\ldots, x_{r}] 
\mapsto [-F_{d}(x_0,\ldots,x_{r-1}),{F_{d-1}(x_1,\ldots,x_{r})}x_1,\ldots, {F_{d-1}(x_0,\ldots,x_{r-1})}x_{r}].
\]
The map $\pi$ is called the \emph{stereographic projection} of $X$ from
$p$.  Its indeterminacy locus  is $p$. Each line through $p$ contained in $X$ gets contracted to a point under $\pi$.  The set of all such lines is defined by the
equations $\{F_d=F_{d-1}=0\}$. This is the indeterminacy locus
of $\pi^ {-1}$, whereas the hypersurface of $H$ with equation
$\{x_0=F_{d-1}=0\}$ is contracted to $p$ by the map $\pi^ {-1}$.
\end{con}

Monoids are useful to produce an important class of Cremona transformations.

\begin{example}[\dJen transformations]
  A \dJen transformation of $\p^r$ is a birational map that preserves
  the family of lines through a point, say $p$, see  \cite{PS15} for a
  comprehensive introduction.  Let
  $\omega:\p^r\rat\p^r$ be a \dJen transformation given by
  $$[x_0,\ldots,x_n]\mapsto [M_0,\ldots,M_n].$$
  Up to a linear automorphisms we may assume that the lines through
  $[1,0,\ldots,0]$ are mapped to lines through $[1,0,\ldots,0]$. Then
  we may choose the $X_i:=(M_i=0)$ to be monoid with vertex
  $[1,0,\ldots,0]$. Moreover $\{X_1,\ldots,X_n\}$ has to contain a
  common divisor $B$, which has to be  itself a monoid.
  This shows that a \dJen map, up to  linear automorphisms, is of the form
  $$[x_0,\ldots,x_n]\mapsto
  [x_0F_0+\tilde{G}_0,G_1(x_0F_1+\tilde{G}_1),\ldots,G_n(x_0F_1+\tilde{G}_1)],$$
  where $G_j\in\C[x_1,\ldots,x_n]_g$, for $j=1,\ldots n$, and 
  $$[x_1,\ldots,x_n]\mapsto [G_1,\ldots, G_n]$$
  defines a Cremona transformation of $\p^{n-1}$.

  We will always be concerned with the subclass of \dJen maps where
  $g=1$. That is Cremona transformations that admit  the form
   $$[x_0,\ldots,x_n]\mapsto
  [x_0F_0+\tilde{G}_0,x_1(x_0F_1+\tilde{G}_1),\ldots,x_n(x_0F_1+\tilde{G}_1)],$$
  with a slight abuse of language we will also write $\omega$ as the
  map associated to the linear system
  $$\{x_0F_0+\tilde{G}_0,x_1(x_0F_1+\tilde{G}_1),\ldots,x_n(x_0F_1+\tilde{G}_1)\}.$$
  With these notation we have:
  \begin{itemize}
  \item[-]  the indeterminacy locus of $\omega$ is $(x_0F_0+\tilde{G}_0)\cap (x_0F_1+\tilde{G}_1)$,
    \item[-]   the family of
      lines through $[1,0,\ldots,0]$ is preserved
      \item[-]  $\omega$ is
birational and the inverse is again a \dJen transformation.
  \end{itemize}
To convince you let me write the map in the
following equivalent way
$$[x_0,\ldots,x_n]\mapsto [\frac{x_0F_0+G_0}B,x_1,\ldots,x_n].$$
This shows that the lines through $[1,0,\ldots,0]$ are preserved, the
map is birational and its inverse is of the same form.

Let me stress  that a \dJen transformation restricts to a linear
automorphism on a general line through the special point $p$.
 
The quadro-quadric map described in
Example~\ref{exa:quadro-quadric} is a \dJen of degree 2, where $B$ is
the span of the codimension $2$ quadric.
The case of $\p^2$ is particularly interesting. A \dJen map  of degree
$d$ has $2d-2$ simple base points, maybe infinitely near. The map can
be factored via the blow up of the multiple point and then $2d-2$
elementary transformations of Hirzebruch surfaces to finally contract
the curve $B$. 
\end{example}

\section{Cremona equivalence: definition and first examples}
Let us introduce in details the main relation we are going to analyze.
\begin{definition}
  Let $X,Y\subset\p^N$ be two birational reduced schemes. We say that $X$
  is Cremona equivalent to $Y$ if there is a birational modification
  $\omega:\p^N\dasharrow \p^N$ that is an isomorphism on the generic points of $X$,
  such that $\omega(X)=Y$.
\end{definition}

To get acquainted it is useful to have some examples in mind.
\begin{example}
Let $C\subset\p^3$ be a twisted cubic. Let $S_1, S_2$ be two general
cubic surfaces containing $C$. As we saw in
Example~\ref{exa:cubo-cubic} there is a cubo-cubic modification of
$\p^3$ that maps the $S_i$ to planes and hence $C$ to a line.
So  $C$ is Cremona
equivalent to a line.
\end{example}

The next is again a cubic curve but reducible
\begin{example}\label{exa:lines}
Consider two sets of  three lines in $\p^n$, say $L_1,L_2,L_3$ and
$R_1,R_2,R_3$.
Let us start assuming that $n=2$.  Let $p_{ij}=L_i\cap L_j$ and $q_{hk}=R_h\cap R_k$. Let
$\lambda=\sharp\{p_{ij}\}_{i,j\in\{1,2,3\}}$ and
$\rho=\sharp\{q_{hk}\}_{h,k\in\{1,2,3\}}$ be the cardinality of the
intersection points. If
$\lambda=\rho$ then  there
is a linear automorphism  of $\p^2$ realizing the Cremona
equivalence. Indeed with the choice of $4$ points we can map one
configuration to the other.

Assume, without loss of generality, that $\lambda=1$ and $\rho=3$.
This time we need a birational modification to put the lines
$\{L_1,L_2,L_3\}$ in general position. Let $p_1\in L_1$, $p_2\in L_2$,
$q_1,q_2\in L_3$ and $x\in\p^2$ be general points. Consider the linear system
$$\L=|\I_{p_1^2\cup p_2^2\cup x^2\cup p_{12}\cup q_1\cup q_2}(4)|$$
of quartics singular along $p_1,p_2,x$ and passing through the intersection point
$p_{12}$ and $q_1$, $q_2$.
Then $\phi_\L:\p^2\dasharrow\p^2$ is a Cremona transformation, the
composition of two standard Cremona maps,
$$\deg \phi_\L(L_i)=4-3=1$$
and the lines $\{\phi_\L(L_1),\phi_\L(L_2),\phi_\L(L_3)\}$ are in general position.

We already observed that, thanks to Noether--Castelnuovo Theorem and
the quadro quadric transformation of Example~\ref{exa:quadro-quadric},
any Cremona map of $\p^2$ can be extended to an arbitrary $\p^r$.
Then  for $r\geq 3$ it is enough to prove that  any set of three lines is
Cremona equivalent to a set of three lines in a plane.

Let $\{L_1,L_2,L_3\}\subset\p^r$ be a set of three lines. Assume first
that there is an irreducible quadric $Q\subset\p^r$ containing the set.
Let $Y\subset Q$ be a general hyperplane section and $p\in Q$ a
general point. Then the quadro-quadric map centered in $Y$ and $p$
maps  $\{L_1,L_2,L_3\}$ to a set of three lines in
$\p^{r-1}$. Therefore by a recursive argument we may assume that the
lines $\{L_1,L_2,L_3\}$ are contained neither in an irreducible quadric nor in a
plane and in
particular $n=3$.

Here I want to propose two different approaches. First consider a
point $p\in\p^3$ and 
conic $C\subset\p^3$ intersecting the three lines. Let $X$ be  a quartic
singular along $C\cup p$ and containing $L_1\cup L_2\cup L_3$.
By an easy dimensional count $X$ exists and it is mapped to a quadric
by a quadro-quadric map centered in $C\cup p$. This is enough to prove
that all lines triples are Cremona equivalent.

 Then I want to argue in a different way.  Without
loss of generality, we may assume that $L_1$ is skew to $L_2$ and
$L_3$ and $L_2\cap L_3=p$.
Pick a general cubic surface $S$ containing $L_1, L_2,L_3$ and let
$R\subset S$
be a line intersecting $L_3$ and skew with $L_1$ and $L_2$.  Let
$\pi:S\to\p^2$ be the blow down of $L_1, L_2, R$ and three more
$(-1)$-curves in $S$, to points $p_1,p_2,p_3,p_4,p_5, p_6\in
\p^2$. Then $\pi(L_3)$ is a line in $\p^2$ spanned by $p_2$ and
$p_3$. Let $C\subset\p^2$ be a conic with
$$C\cap\{ p_1,p_2,p_3,p_4,p_5, p_6\}=\{p_1,p_2,p_4\},$$
then $\Gamma:=\pi^{-1}_*C$ is a twisted cubic intersecting $L_1, L_2$ and
$L_3$ in a point. Let $S_1$ be a general cubic surface containing
$\Gamma$ and $R$ the residual intersection
$$S\cap S_1=\Gamma\cup R.$$
Then $R\cap L_i=2$ and the  cubo-cubic map $\phi:\p^3\dasharrow \p^3$
centered in $R$ maps $S$ to a plane and the $L_i$ in lines. This is
enough to conclude that all triples of lines in $\p^n$ are Cremona
equivalent.

Despite the beauty of this constructions it is clearly impossible to
proceed in this way for an arbitrary number of lines. Already four
lines have many different configurations and one should be able to
produce a Cremona modification for all of them. Note further that in
$\p^2$ not all line configurations are Cremona equivalent, see
\cite{Du19} for a vast treatment of lines configuration with respect to
Cremona equivalence and the problem of contractibility.
\end{example}

Next we consider monoids.
\begin{example}\label{exa:monoids}
Irreducible monoids are always Cremona equivalent to a hyperplane. Let
$X\subset\p^n$ be a monoid of degree $d$ with vertex
$p_0=[1,0,\ldots,0]$ and $Y$ a monoid of degree $d-1$ with the same
vertex.
Then the \dJen transformation given by
$$\{X,Yx_1,\ldots,Yx_n\}$$
maps $X$ to the hyperplane $(y_0=0)$. Hence any irreducible monoid is Cremona
equivalent to a hyperplane.
\end{example}

\begin{example} Any irreducible rational surface in $\p^3$ of degree at most $3$ is
  Cremona equivalent to a plane. Quadrics and singular cubics are
  monoids, therefore we conclude with Example~\ref{exa:monoids}. For
  smooth cubic we may use the cubo-cubic map to produce the
  equivalence. Note that non rational irreducible cubics, that is
  cones over elliptic curves, are not Cremona equivalent to any
  surface of lower degree, simply because
  all surfaces of smaller degree are rational.

  Already for quartic surfaces in $\p^3$  the situation is much more
  complicate, but it is still possible to study it, see \cite{Me20} \cite{Me21}.
\end{example}

Despite this quite long list of explicit examples of Cremona
equivalences it is  in general  quite rare to be able
to control birational modification that realizes a Cremona
equivalence. On the other hand the Cremona group is so flexible that it
is able to realize a huge set of  Cremona equivalences.
We are ready to appreciate the following theorem.

\begin{theorem}\label{th:mp}
 \cite{MP09}\cite{CCMRZ16} Let $X,Y\subset \p^r$ be irreducible
 birational subvarieties and assume that $\dim X\leq r-2$. Then $X$ is
 Cremona equivalent to $Y$.
\end{theorem}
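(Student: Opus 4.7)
The plan is to reduce both $X$ and $Y$ to subvarieties of a common linearly embedded subspace via explicit Cremona modifications, and then to link the two resulting codimension-$2$ subvarieties by a chain of Cremona-extended double projections. The workhorse is the construction of irreducible monoids containing a prescribed subvariety. Concretely, given an irreducible $X\subset\p^r$ with $\dim X\leq r-2$, I would show that for a suitable degree $d$ and a general point $p\in\p^r\setminus X$, the linear system of degree-$d$ hypersurfaces containing $X$ and having multiplicity exactly $d-1$ at $p$ is positive-dimensional; a generic such hypersurface is then an irreducible monoid $M\supset X$ whose vertex $p$ lies off $X$. This is the dimensional input packaged in Lemma~\ref{lem:mp16_1}, and it is the place where the codimension hypothesis enters decisively.

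Given such a monoid $M$ with vertex $p$, I would build a \dJen transformation $\omega$ of $\p^r$ based at $p$, using $M$ as the degree-$d$ component and a compatible $(d-1)$-monoid $B$ as the common factor of the remaining coordinates, exactly in the spirit of Example~\ref{exa:monoids}. The resulting map contracts $M$ birationally onto a hyperplane $H$, and since $p\notin X$ it restricts to an isomorphism at the generic point of $X$; hence $\omega(X)\subset H\cong\p^{r-1}$. Iterating, and extending each Cremona selfmap of the current linear span to all of $\p^r$ as in Example~\ref{exa:quadro-quadric}, I would drive $X$ into a linearly embedded $\Lambda_X\cong\p^{\dim X+2}$ in which it has codimension exactly $2$. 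The same procedure applied to $Y$ yields $Y\subset\Lambda_Y\cong\p^{\dim Y+2}$; a linear automorphism of $\p^r$ then identifies $\Lambda_X$ with $\Lambda_Y$, so that both $X$ and $Y$ lie in a common $\Lambda\cong\p^{\dim X+2}$.

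The remaining task is to realize the birational equivalence $X\rat Y$ inside $\Lambda$ by a Cremona selfmap of $\Lambda$. Following the strategy of \cite{CCMRZ16}, I would factor $X\rat Y$ as a composition of \emph{double projections} from vertices of carefully chosen monoids: each double projection maps an intermediate subvariety birationally onto the next, and extends to a Cremona transformation of $\Lambda$ via the monoid construction of the first step. The improved dimension estimate of Lemma~\ref{lem:mp16_1} is precisely what bypasses the Segre-locus analysis of \cite{CC01}: it provides enough freedom to pick the successive double-projection vertices without having to control the locus of secant lines meeting a prescribed subvariety.

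The main obstacle is the interplay between the monoid construction and the recursion. At each stage one needs simultaneously that the monoid contain the subvariety in question, be irreducible, and have its vertex outside that subvariety; as the ambient dimension drops during the descent the room available for vertices shrinks, so the estimate of Lemma~\ref{lem:mp16_1} must remain sharp enough for the induction to continue. A secondary difficulty is ensuring that the final chain of double projections realises the \emph{prescribed} birational map $X\rat Y$ and not merely some other birational identification of their function fields, which is the delicate bookkeeping at the heart of the CCMRZ16 argument that the current paper adapts.
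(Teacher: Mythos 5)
Your toolbox is the right one (monoids through $X$, \dJen maps, double projections, the dimension count of Lemma~\ref{lem:mp16_1}), and the preliminary reduction pushing $X$ and $Y$ into a common linear subspace $\Lambda\cong\p^{\dim X+2}$ is harmless but buys nothing: once both varieties sit in $\Lambda$ with codimension exactly $2$, all of the work is still ahead of you, and the argument of \cite{MP09}/\cite{CCMRZ16} (and of Theorem~\ref{thm:mp} here) runs directly in $\p^r$ without this step. The genuine gap is that your final step --- ``factor $X\rat Y$ as a composition of double projections from vertices of carefully chosen monoids'' --- is not a proof strategy but a restatement of the theorem; the mechanism that makes it work is exactly the piece you defer to ``delicate bookkeeping.'' Concretely: since $X$ and $Y$ are birational, there is a common model $Z$ with $X$ and $Y$ given by $t\mapsto[\phi_0(t),\ldots,\phi_r(t)]$ and $t\mapsto[\psi_0(t),\ldots,\psi_r(t)]$ for linearly equivalent systems; one embeds $Z$ into $\p^{r+1}$ by $[\phi_0,\ldots,\phi_r,\psi_0]$, finds an irreducible monoid in $\p^{r+1}$ with \emph{two} prescribed vertices containing this image (Lemma~\ref{lem:mp16}, not the one-vertex Lemma~\ref{lem:mp16_1} --- and this is where $\dim X\leq r-2$, i.e.\ codimension $3$ in $\p^{r+1}$, is used), and the resulting double projection is a Cremona map of $\p^r$ that replaces one $\phi$-coordinate by one $\psi$-coordinate. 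Iterating $r+1$ times interpolates from $X$ to $Y$. Without describing this coordinate-replacement device, nothing in your sketch explains why a chain of double projections should terminate at $Y$ rather than at some other birational copy of $X$.

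Two further inaccuracies. First, the ``secondary difficulty'' you raise (realizing the prescribed birational map rather than some identification of function fields) is a non-issue: Cremona equivalence as defined only requires \emph{some} $\omega$ with $\omega(X)=Y$, and in any case the coordinate-replacement scheme automatically realizes the map $\nu\circ\eta^{-1}$ at each step. Second, you misattribute the role of Lemma~\ref{lem:mp16_1}: it is not what ``bypasses the Segre-locus analysis of \cite{CC01}.'' The two-vertex monoid lemma carries the separate hypothesis that the projection of $Z$ from the line joining the two vertices be birational; in the irreducible case of Theorem~\ref{th:mp} this is exactly what \cite{CC01} supplies, and it is only in the reducible generalization that the paper replaces it by the direct argument of Claim~\ref{sub:claim1}. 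The dimension estimate of Lemma~\ref{lem:mp16_1} serves a different purpose, namely producing irreducible monoids not containing the cones over the components of $Z$, so that the double projection does not contract $X$.
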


Let me spend a few words on this result and its consequences.
The Theorem proves that the Cremona group
contains, as subsets, the set of birational self map of any subvariety
of codimension
at least two. Note that in general nothing can be said on the group
structure. That is there is no hint that it is possible to realize the
group of birational selfmaps of a subvariety as subgroup of some
Cremona group. Despite the proof of Theorem~\ref{th:mp}, especially
the second one, is quite algorithmic it is in general almost impossible
to write down an explicit map  that furnishes the Cremona equivalence.
On the other hand for few special cases of rational varieties it is possible to
describe an explicit linearization, see \cite{CCMRZ16}.

It is quite easy to see that the bound on the codimension is sharp.

\begin{example}\label{exa:sextic}
 Let $C\subset\P^2$ be an irreducible rational curve of degree $6$ with ordinary
 double points. Note that the pair $(\p^2,\frac12 C)$ has canonical
 singularities, therefore, by a standard application of Sarkisov
 theory, \cite{MP09}, any curve Cremona equivalent to $C$ has degree at least $6$,
 therefore $C$ is not Cremona equivalent to a line.  

In a similar fashion it is easy to produce examples in arbitrary
dimension, see \cite{MP09}. It is also possible to see that in
general a fixed abstract variety has infinitely many inequivalent
birational embeddings with respect to Cremona equivalence, \cite{MP09}.
\end{example}

\begin{definition}
  \label{def:cones} A reduced variety $Z\subset \p^n$ is a cone if there is a
  point $p\in Z$, called vertex, such that $Z=\cup_{x\in Z\setminus\{p\}}\langle
  x,p\rangle$. The cone with vertex $p\in\p^n$ and base $X\subset\p^n$
   is
  $$C_p(X):=\cup_{x\in X}\langle x,p\rangle. $$  
\end{definition}

\begin{example}(Cones) In \cite{Me13} it is proven that two divisorial
  cones $X,Y\subset\p^n$ are Cremona equivalent if their general
  hyperplane sections are Cremona equivalent. In particular, thanks to
  Theorem~\ref{th:mp}, a divisorial cone
  over a rational variety is always Cremona equivalent to a hyperplane.
\end{example}

It is less clear if the irreducibility assumption is needed. On one
hand the example of lines, Example~\ref{exa:lines}, is not
encouraging since the Cremona modification needed depends on the
intersection between the irreducible components. On the other hand
there are no theoretical limits to extend the proof to reduced schemes.
I must say that I was  quite surprised
when I realized that with few improvements a combination of the proofs in
\cite{MP09} and \cite{CCMRZ16}  worked in the reducible  case.
Before going into this I want to give a last explicit example of
Cremona equivalence for reduced schemes:  the case of
points.

This is the only case  in which I am able to provide the Cremona modification
in a
quite explicit way.

\begin{con}\label{con:enoughdjen}
  Let us consider a \dJen transformation of degree $d$,
  $$\omega:\p^r\to \p^r$$
given by
$$[x_0,\ldots,x_r]\mapsto[x_0F_0+G_0,x_1(x_0F+G),\ldots,x_r(x_0F+G)]$$
Then $p_0=[1,0,\ldots,0]$ is the vertex of all the monoids and the
lines through $p_0$ are preserved. Let $l\ni p_0$ be a line and assume
that $\omega$ is defined on the generic point of $l$. Then we have
that either $\omega_{|l}$ is an automorphism or $\omega(l)=p_0$.
Moreover $\omega$ is an isomorphism outside the cone with vertex $p_0$
and base  $$(x_0F_0+G_0=x_0F+G=0).$$

As a birational map, we can write $\omega$ as
$$[x_0,\ldots,x_r]\mapsto[\frac{x_0F_0+G_0}{x_0F+G}, x_1,\ldots,x_r]. $$
Let $p=[a_0,\ldots,a_r], q=[b_0,\ldots,b_r]\in\p^r\setminus\{p_0\}$ be points aligned with
$p_0$. Then we may assume that  $a_i=b_i$, for $i=1,\ldots,r$. Hence the condition
$\omega(p)=q$ translates into the equation
$$a_0F_0(a_1,\ldots,a_r)+G_0(a_1,\ldots,a_r)=b_0(a_0F(a_1,\ldots,a_r)+G(a_1,\ldots, a_r)), $$
linear in the coefficient of $F_0,G_0,F,G$.
Moreover if we choose a map $\omega$ such that $\omega(p)=p$, for a
point $p\in\p^r\setminus\{p_0\}$ then
$\omega$ is an isomorphism in a neighborhood of $p$.

Let us  pick two points, $p,q\subset\p^r$
and a set of $a$ points $\{p_1,\ldots,
p_a\}\subset\p^r\setminus\langle p,q\rangle$. Then we may choose
$p_0\in\langle p,q\rangle\setminus\{p,q\}$ such that no pair of points in $\{p_1,\ldots,
p_a\}$ is aligned with $p_0$.  This shows that there is a $d(a)$ such that for $d\geq
d(a)$ there is a \dJen map, $\omega$, centered in $p_0$ of degree $d$ such that $\omega(p_i)=p_i$,
for $i=1,\ldots,a$ and $\omega(p)=q$. In particular  $\omega$ is an isomorphism in
a neighborhood of $\{p_1,\ldots, p_a,p,q\}$.
\end{con}

Let us take advantage of this construction to give an explicit version
of the Cremona equivalence between reduced sets of points.

\begin{theorem}\label{th:puntiok}
  Let $Z=\{p_1,\ldots,p_s\}$ and $Z^\prime=\{p_1^\prime,\ldots,p_s^\prime\}$ be reduced sets of $s$ points in $\P^r$. Then there
  exists a Cremona transformation $\omega:\P^r\rat\P^r$ such that
  $\omega$ is an isomorphism in a neighborhood of $Z$ and
  $\omega(Z)=Z^\prime$.
\end{theorem}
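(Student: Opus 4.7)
The plan is to build $\omega$ as a composition $\omega=\sigma_s\circ\cdots\circ\sigma_1$ of \dJen maps (or short products of such) produced by Construction~\ref{con:enoughdjen}, where each $\sigma_i$ is responsible for moving the current image of $p_i$ to $p_i'$ while fixing the targets $p_1',\ldots,p_{i-1}'$ already matched. Setting $\omega_0:=\mathrm{id}$ and $\omega_i:=\sigma_i\circ\omega_{i-1}$, I shall maintain at each stage $i$ two invariants: (a) $\omega_i(p_j)=p_j'$ for every $j\leq i$, and (b) $\omega_i$ is an isomorphism in a neighborhood of each of $p_1,\ldots,p_s$. Invariant (b) in particular guarantees that $q_k:=\omega_{i-1}(p_k)$ is a genuine point of $\p^r$ for every $k\geq i$.

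The inductive step at stage $i$ reduces to producing a $\sigma_i\in Cr_r$ with $\sigma_i(q_i)=p_i'$, $\sigma_i(p_j')=p_j'$ for $j<i$, and $\sigma_i$ an isomorphism in a neighborhood of $\{p_1',\ldots,p_{i-1}',q_i,q_{i+1},\ldots,q_s\}$. This is precisely the situation handled by Construction~\ref{con:enoughdjen}: with source $q_i$, target $p_i'$, and fixed set $\{p_1',\ldots,p_{i-1}',q_{i+1},\ldots,q_s\}$, one chooses a generic vertex $p_0$ on the line $L:=\langle q_i,p_i'\rangle$ avoiding the finitely many positions that align some pair of fixed points, and a \dJen map of sufficiently large degree $d\geq d(s-1)$ supplies $\sigma_i$.

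Construction~\ref{con:enoughdjen} assumes the fixed points lie off the pencil line $L$; when that fails I reroute through an auxiliary point $q^*\in\p^r$ chosen so that both lines $\langle q_i,q^*\rangle$ and $\langle q^*,p_i'\rangle$ avoid the fixed set. Such a $q^*$ exists because the bad locus is a union of $2(s-1)$ lines through $q_i$ or $p_i'$, which is a proper closed subset of $\p^r$ for $r\geq 2$. Applying Construction~\ref{con:enoughdjen} twice — first to realize $q_i\mapsto q^*$, then $q^*\mapsto p_i'$, each time fixing the same $s-1$ points — and composing the two \dJen maps yields $\sigma_i$ in this case as well.

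The only real obstacle is this collinearity bookkeeping: ensuring the hypothesis of Construction~\ref{con:enoughdjen} can always be arranged, which the auxiliary-point trick handles uniformly at the price of doubling the number of \dJen factors in the composition. Once past it, the invariants (a) and (b) propagate by direct checking — $\omega_{i-1}$ is an isomorphism near $p_j$, $\sigma_i$ is an isomorphism near $\omega_{i-1}(p_j)$, and the targets match by construction — and $\omega:=\omega_s$ delivers the Cremona transformation required by the theorem.
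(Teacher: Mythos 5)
Your proof is correct and follows essentially the same strategy as the paper: a composition of de Jonqui\'eres maps supplied by Construction~\ref{con:enoughdjen}, matching one point at a time while fixing the already-placed targets and the remaining sources. The only difference is bookkeeping: the paper removes all the collinearity obstructions at once by a preliminary generic quadro-quadric modification putting the $2s$ points in general position, whereas you reroute through an auxiliary point whenever the line $\langle q_i,p_i'\rangle$ meets the fixed set.
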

\begin{proof}
  Let us prove the statement via a recursive argument.
   We may assume, eventually after a generic quadro-quadric modification, that for any $i=1,\ldots, s$
  $$\langle p_i,p_i^\prime\rangle\cap\{p_1,\ldots,p_s,p_1^\prime\ldots,p_s^\prime\}=\{p_i,p_i^\prime\}.$$
 Then by Construction~\ref{con:enoughdjen} there is a \dJen map $\phi_1:\p^r\rat\p^r$ such that:
  \begin{itemize}
  \item[-] $\phi_1(p_1)=p_1^\prime$,
    \item[-] $\phi_1(p_i)=p_i$ and
  $\phi_1(p_i^\prime)=p_i^\prime$ for $i\geq 2$.
  \end{itemize}
   In particular
  $\phi_1$ is an isomorphism in a neighborhood of $Z\cup Z^\prime$.
  Set, recursively, $\phi_i:\p^r\rat\p^r$ a \dJen map such that:
  \begin{itemize}
  \item[-] $\phi_i(p_j^\prime)=p_j^\prime$, for $j<i$,
  \item[-]  $\phi_i(p_i)=p_i^\prime$,
  \item[-] $\phi_i(p_h)=p_h$ and $\phi_i(p_h^\prime)=p_h^\prime$ for $h>i$.
    \end{itemize}
    Then the composition
    $$\Phi:=\phi_r\circ\cdots\circ\phi_1 $$
realizes a Cremona equivalence between $Z$ and $Z^\prime$. 
\end{proof}
\begin{remark}
  \label{rem:puntiok} Note that Theorem~\ref{th:puntiok} proves the
 Theorem~\ref{th:main} for $r=2$. The next section we will devoted to extend it to arbitrary
 $r\geq 3$.  
\end{remark}

\section{Cremona equivalence for reduced schemes}
\label{sec:crem-equiv}

In this section $X$ and $Y$ will be reduced
schemes in $\p^r$. Let us start observing a useful way to consider a
birational relation between them.  The schemes $X$ and $Y$ are
birational if exists a smooth scheme $Z$ such that:
\begin{itemize}
\item[-] $Z$ has a number of connected components equal to the
  number of irreducible components of $X$ and $Y$;
  \item[-]  there are two base point free linear systems
$\L_X$ and $\L_Y$ such that the induced morphism
$\phi_{\L_X}:Z\to X$ and $\phi_{\L_Y}:Z\to Y$ are dominant and  birational.
\end{itemize}

Let ${\mathcal M}=\L_X+\L_Y$ be the linear system on $Z$ and
$\phi_{\mathcal M}:Z\to\p^N$ the associated map. We may consider $X$
and $Y$ as linear projections of $\phi_{\mathcal
  M}(Z)\subset\p^N$. This is essentially the reason we opted in
Definition~\ref{def:Cremona} to accept non coprime sets of polynomials.
With this trick we will be able to factorize a Cremona equivalence between
$X$ and $Y$ into steps associated to  monoids.

\begin{con}[Double projection]
\label{con:double}
Let $X\subset \P^ r$ be an irreducible monoid of degree $d$.  
Let $p_1, p_2\in X$ be two vertices, let 
$H_1,H_2$ be hyperplanes with $p_i\not \in H_i$, and  { consider the
stereographic projections of $X$ from $p_i$, which is the restriction of the projection
$\pi_i\colon \P ^r\dasharrow H_i$ from $p_i$, with
$i=1,2$.} The map
\[
\pi_{X,p_1,p_2}:=\pi_2\circ \pi_1^ {-1}\colon H_1\dasharrow H_2
\] is a Cremona transformation. If $p_1=p_2=p$, then
$\pi_{X,p,p}$ does not depend on $X$ and it is a linear
automorphism, classically called the perspective with center $p$ of $H_1$ to $H_{2}$.

From now on, we
restrict to the case when $p_1\neq p_2$. In this setting, the map
$\pi_{X,p_1,p_2}$ is called the double projection and it depends on $X$ and it is in general nonlinear.
Assume that   $p_r=[0,\ldots,0,1], p_{r-1}=[0,\ldots,
0,1,0]$ and the hyperplanes $H_1,  H_2$ have equations $(x_r=0)$ and
$(x_{r-1}=0)$ respectively.
Then the  defining equation of $X$ has the form
\begin{equation*}\label{eq:bimon}
F_d+x_{r-1}G_{d-1}+x_rF_{d-1}+ x_{r}x_{r-1}F_{d-2}=0,
\end{equation*}
with $F_i,G_i\in\C[x_0,\ldots,x_{r-2}]_i$.
Then  the double projection map $\pi_{X,p_r,p_{r-1}}$ is given by 
\[
[x_0,\ldots, x_{r-1}]\mapsto [(F_{d-2}x_{r-1}+F_{d-1})x_0,\ldots,
(F_{d-2}x_{r-1}+F_{d-1})x_{r-2},-F_d-x_{r-1}G_{d-1}].
\]
Observe that the double projection is a \dJen map of degree $d$
centered in $p_{r-1}\in H_1$.
  \end{con}

The main idea to produce the Cremona equivalence between $X$ and $Y$
is borrowed from \cite{MP09}. Since $X$ and $Y$ are linear projection
of the same variety $\phi_{\mathcal M}(Z)\subset\p^N$ their embedding
is determined by functions on $Z$ that are linearly equivalent. Let us
see how to use this remark. Let, in this set up,  $\phi:Z\rat X\subset\p^r$ be given
by equations
$$t\mapsto [\phi_0(t),\ldots,\phi_r(t)]$$
and $\psi:Z\rat Y\subset\p^r$ by equations
$$t\mapsto[\psi_0(t),\ldots,\psi_r(t)],$$
with $t$ coordinates on a dense open subset of  $Z$ intersecting all
connected components.
In general $\{(\phi_i=0)\}$ and $\{(\psi_j=0)\}$ have fixed divisorial
component but nonetheless they define birational maps to $X$ and $Y$ respectively. 

Then we may consider the birational embedding $\eta:Z\rat Z_1\subset\p^{r+1}$
given by equations
$$t\mapsto[\phi_0(t),\ldots,\phi_r(t),\psi_0(t)].$$

Assume that there is an irreducible monoid $X$, with vertices $p_r$ and $p_{r+1}$
and containing
$Z_1$. Then the double projection $\pi_{X,p_{r+1},p_r}$ produces a Cremona
map $\omega:\p^r\rat\p^r$
such that
$\omega(X)$ is associated to the birational embedding
$$\phi_1:Z\rat
X_1\subset\p^r$$
given by equations
$$t\mapsto[\phi_0(t),\ldots,\phi_{r-1}(t),\psi_0(t)].$$
If further, the monoid $X$ does not contains any of the cones with vertex
either $p_r$ or $p_{r+1}$
and base an irreducible component of $Z_1$, then the double projection realizes a
Cremona equivalence between $X$ and $\omega(X)$.

Iterating this
process we may substitute the functions $\phi_j$ with the functions
$\psi_h$ realizing a chain of double projections, that is \dJen maps, that produces the
required Cremona equivalence.

To let this argument work we need to produce the required monoids.
Let us start rephrasing \cite[Lemma 2.1]{CCMRZ16} to the reducible case, I adopt notation of \cite[Chapter
6]{Fu98} for the intersection theory needed.
\begin{remark}
  I want to thank the referee for pointing out the following
 version of the proof that allows to remove the assumption that the
 projection of $Z$ from $p_r$ is birational.
\end{remark}

\begin{lemma}
  \label{lem:moniddim}
  Let $Z:=\cup_1^h Z_i\subset\p^r\setminus\{[0,\ldots,0,1]\}$ be a reduced scheme, $M_d$ the
  linear system of monoids with vertex $p_r:=[0,\ldots,0,1]$ and
  $M(Z)_d\subset M_d$
  those  containing the scheme $Z$.
  Then, for $d\gg0$,   we have
  $$\dim (M(Z)_d)\geq
  \frac{2d^{r-1}}{(r-1)!}+\frac{(r-1-\delta)d^{r-2}}{(r-2)!}+O(d^{r-3}), $$
  where
  $$\delta=Z\cdot \O_{Z}(1)^{r-2},$$
 in particular $\delta=0$ if  $\dim Z<r-2$.
\end{lemma}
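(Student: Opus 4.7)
The plan is to compute $\dim M_d$ exactly as a polynomial in $d$ and then bound the linear conditions imposed by containment of $Z$ through a restriction map to $H^0(Z,\O_Z(d))$. Each monoid with vertex $p_r$ has defining form $F_{d-1}(x_0,\ldots,x_{r-1})x_r+F_d(x_0,\ldots,x_{r-1})$, so the vector space $V_d$ of such forms has
\begin{equation*}
\dim V_d = \binom{d+r-2}{r-1}+\binom{d+r-1}{r-1}.
\end{equation*}
Expanding the two products $\prod_{i=0}^{r-2}(d+i)$ and $\prod_{i=1}^{r-1}(d+i)$ and adding, the coefficient of $d^{r-2}$ works out to $(r-1)(r-2)/2+r(r-1)/2=(r-1)^2$ divided by $(r-1)!$, giving
\begin{equation*}
\dim M_d = \dim V_d - 1 = \frac{2d^{r-1}}{(r-1)!}+\frac{(r-1)d^{r-2}}{(r-2)!}+O(d^{r-3}).
\end{equation*}

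Next, consider the restriction map
\begin{equation*}
r_Z\colon V_d\to H^0(Z,\O_Z(d)),\qquad (F_{d-1},F_d)\mapsto (F_{d-1}x_r+F_d)\big|_Z.
\end{equation*}
By construction $M(Z)_d=\p(\ker r_Z)$, whence
\begin{equation*}
\dim M(Z)_d\geq \dim V_d-h^0(Z,\O_Z(d))-1,
\end{equation*}
so the proof reduces to the asymptotic $h^0(Z,\O_Z(d))\leq \frac{\delta}{(r-2)!}d^{r-2}+O(d^{r-3})$ for $d\gg0$.

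For this bound, write $Z=\bigcup_{i=1}^h Z_i$ as the union of its irreducible components and use the short exact sequence $0\to\O_Z\to\bigoplus_i\O_{Z_i}\to\mathcal{Q}\to 0$ (with $\mathcal{Q}$ supported on pairwise intersections, so the first map is injective because $Z$ is reduced). Twisting by $\O(d)$ and taking global sections yields an injection $H^0(Z,\O_Z(d))\hookrightarrow\bigoplus_i H^0(Z_i,\O_{Z_i}(d))$. For each $i$, Serre vanishing together with comparison of the Hilbert function and the Hilbert polynomial gives $h^0(Z_i,\O_{Z_i}(d))=\tfrac{\deg Z_i}{(\dim Z_i)!}d^{\dim Z_i}+O(d^{\dim Z_i-1})$ for $d\gg 0$. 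Components with $\dim Z_i<r-2$ contribute only $O(d^{r-3})$; the components with $\dim Z_i=r-2$ contribute $\tfrac{\deg Z_i}{(r-2)!}d^{r-2}$, and their total degree is exactly $\delta=Z\cdot\O_Z(1)^{r-2}$ in the sense of Fulton (zero if $\dim Z<r-2$). Assembling these pieces produces the claimed inequality.

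The main point — and where the argument improves on the previous version — is that the injection $H^0(Z,\O_Z(d))\hookrightarrow\bigoplus_i H^0(Z_i,\O_{Z_i}(d))$ handles the reducible reduced case uniformly, without any need to analyze the pairwise intersections of the $Z_i$ or their behavior under projection. This is exactly the simplification hinted at in the preceding remark: since $Z$ is never projected from $p_r$ in the course of the argument, no hypothesis on the birationality of such a projection is required.
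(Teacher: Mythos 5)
Your proof is correct and takes essentially the same route as the paper's: both reduce the lemma to the linear-algebra bound $\dim M(Z)_d\geq \dim M_d-h^0(Z,\O_Z(d))$ together with the asymptotic $h^0(Z,\O_Z(d))=\frac{\delta}{(r-2)!}d^{r-2}+O(d^{r-3})$ for the reduced scheme $Z$. The only differences are presentational --- the paper performs the count on the blow-up of $\P^r$ at $p_r$, identifying $M_d$ with $|dH-(d-1)E|$ and using $p_r\notin Z$ to see that restriction of monoid forms to $Z$ lands in $H^0(Z,\O_Z(d))$, whereas you work directly with the equations $F_{d-1}x_r+F_d$, and your injection $H^0(Z,\O_Z(d))\hookrightarrow\bigoplus_i H^0(Z_i,\O_{Z_i}(d))$ merely makes explicit an asymptotic the paper asserts without proof.
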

\begin{proof}    Let $\nu:V\to \P^ r$ be the  blow--up of $\P^ r$ at
  $p_r$ with exceptional divisor $E$. We denote by $H$ the pull back  of a general hyperplane of $\P^ r$ and by $Z'$ the proper transform of $Z$. 

  In this notation we have
  $$M_d \cong\vert dH-(d-1)E\vert= \vert (d-1)(H-E)+H\vert,$$
and, by a simple dimension count,
\begin{equation}\label{eq:mon}
\dim (M_d)=\frac{2d^{r-1}}{(r-1)!}+\frac{(r-1)d^{r-2}}{(r-2)!}+O(d^{r-3}).
\end{equation}
Set $s=\dim Z\leq r-2$ by assumption the point $p\not\in Z$, hence
$$\o_V(E)\otimes\o_{Z^\prime}\cong\o_{Z^\prime} $$
and
$$\o_{Z^\prime} (dH-rE)\cong\o_{Z^\prime} (dH)\cong\o_{Z}(d),  $$
for any $d,r\in\Z$.
In particular, as a polynomial in $d$
$$h^0(\o_{Z^\prime}(dH-rE))=h^0(\o_{Z}(d))=\frac{\delta}{s!}d^s+o(d^{s-1}).$$
Thus
$$\dim(M(Z)_d)\geq \dim(M_d)-h^0(\o_{Z^\prime}(d(H-E)))= \frac{2d^{r-1}}{(r-1)!}+\frac{(r-1-\delta)d^{r-2}}{(r-2)!}+O(d^{r-3}). $$

\end{proof}

Next we use Lemma~\ref{lem:moniddim}  to produce monoids.

\begin{lemma}  \label{lem:mp16_1} Let $Z=\cup_1^h Z_j\subset \P^ r$, with
  $r\geqslant 3$, be a reduced scheme of  dimension
  $r-2$ and let $p\in \P^ r\setminus Z$ be  such that the projection
  of $Z$ from $p$ is birational to its image. For $d\gg 0$ there is an irreducible
  monoid of degree $d$ with vertex $p$, containing $Z$ and not
  containing the cone  $C_p(Z_j)$
  over $Z_j$ with vertex $p$, for $j=1,\ldots h$.
\end{lemma}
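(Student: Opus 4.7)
My plan is to exhibit the required monoid as a general element of the linear system $M(Z)_d$ of monoids with vertex $p$ containing $Z$. The bad loci --- those monoids in $M(Z)_d$ that contain some $C_p(Z_j)$, or that are reducible --- should each be contained in a proper linear subspace of $M(Z)_d$ for $d \gg 0$; since there are finitely many and we work over the infinite field $\C$, a general member then avoids them all, yielding an irreducible monoid of the desired type.

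The core is a dimension comparison. By Lemma~\ref{lem:moniddim},
\[
\dim M(Z)_d \geq \frac{2d^{r-1}}{(r-1)!} + \frac{(r-1-\delta)d^{r-2}}{(r-2)!} + O(d^{r-3}),
\]
with $\delta = \deg Z = \sum_j \deg Z_j$; birationality of $\pi_p|_Z$ ensures that $\deg C_p(Z_j) = \deg Z_j$ and that no $Z_k$ lies in $C_p(Z_j)$ for $k\ne j$. Fix $j$ and let $M_{bad,j} := \{M \in M(Z)_d : C_p(Z_j)\subset M\}$. Any such $M$ factors as $M = f_{C_p(Z_j)}\cdot g$, and additivity of multiplicity at $p$ forces $g$ to be a monoid of degree $d-\deg Z_j$, subject to vanishing on $Z\setminus Z_j$. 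By Serre vanishing, for $d\gg0$ those conditions are independent, giving
\[
\dim M_{bad,j} = \dim M_{d-\deg Z_j} - h^0(\O_{Z\setminus Z_j}(d-\deg Z_j)) = \frac{2d^{r-1}}{(r-1)!} + \frac{(r-1-\delta-\deg Z_j)d^{r-2}}{(r-2)!} + O(d^{r-3}).
\]
Subtracting, $\dim M(Z)_d - \dim M_{bad,j} \geq \tfrac{\deg Z_j}{(r-2)!} d^{r-2} + O(d^{r-3})>0$ for $d\gg 0$, so $M_{bad,j}\subsetneq M(Z)_d$.

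For irreducibility I would show that $M(Z)_d$ has no fixed divisorial component and invoke Bertini. Any factorization $M=A\cdot B$ of a degree-$d$ monoid with multiplicity $d-1$ at $p$ forces $\{A,B\}$ to consist of one cone and one monoid with vertex $p$. So a putative fixed component $D$ is itself either a cone or a monoid with vertex $p$; the subsystems $\{M\in M_d : f_D\mid f_M\}$ then have dimension $\sim \tfrac{2(d-\deg D)^{r-1}}{(r-1)!}$ (cone case) or $\sim \tfrac{d^{r-1}}{(r-1)!}$ (monoid case, since the complementary factor is a cone), both strictly below the $\tfrac{2d^{r-1}}{(r-1)!}$ lower bound for $\dim M(Z)_d$ when $d\gg 0$. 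Hence no fixed component exists, and a general member of $M(Z)_d$ is irreducible.

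The main obstacle is that the $d^{r-1}$ coefficients of $\dim M(Z)_d$ and $\dim M_{bad,j}$ coincide, so the strict inequality emerges only in the $d^{r-2}$ term; reading it off requires supplementing the lower bound of Lemma~\ref{lem:moniddim} with a matching upper bound on $M_{bad,j}$ via Serre vanishing applied to $\I_{Z\setminus Z_j}$, and the birationality of $\pi_p|_Z$ is essential to make the degree contributions balance correctly.
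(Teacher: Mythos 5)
Your overall strategy (take a general member of $M(Z)_d$ and defeat the bad loci by dimension counts from Lemma~\ref{lem:moniddim}) is the paper's strategy, and your count for $M_{bad,j}$ agrees with the paper's when it applies. But there are two genuine gaps. First, the factorization $M=f_{C_p(Z_j)}\cdot g$ presupposes that $C_p(Z_j)$ is a hypersurface, i.e.\ that $\dim Z_j=r-2$. The lemma only assumes $\dim Z=r-2$; individual components may have smaller dimension (and do, in the applications), in which case $C_p(Z_j)$ has codimension $\geq 2$, containment in $M$ is not a divisibility condition, and your computation of $\dim M_{bad,j}$ does not get off the ground. The paper handles exactly this by projecting $Z_j$ and $p$ generically to $\P^{a+2}$ with $a=\dim Z_j$, where the cone \emph{is} a divisor, running your kind of count there, and then adding a cone $D_j$ over the remaining components to get back an element of $M(Z)_{2d}$ missing $C_p(Z_j)$. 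You would need either this reduction or a direct estimate of the conditions imposed by $C_p(Z_j)$ beyond those imposed by $Z$.

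Second, the irreducibility step does not close. Quite apart from the fact that Bertini requires ruling out ``composite with a pencil'' and not merely fixed components, your dimension comparison in the cone case is wrong when $\deg Z$ is large: the locus $\{M\in M_d: f_D\mid f_M\}$ for a cone $D$ of degree $e$ has dimension $\frac{2d^{r-1}}{(r-1)!}-\frac{2e\,d^{r-2}}{(r-2)!}+O(d^{r-3})$, while the guaranteed lower bound for $\dim M(Z)_d$ is $\frac{2d^{r-1}}{(r-1)!}+\frac{(r-1-\delta)d^{r-2}}{(r-2)!}+O(d^{r-3})$; the leading terms cancel, and for $\delta>r-1+2e$ (e.g.\ $e=1$ and $Z$ of large degree) the claimed strict inequality fails, so low-degree cones are not excluded by this count. (To salvage it you must intersect with the condition of containing $Z$, which reintroduces $\delta$ on the other side.) The paper avoids Bertini entirely: a reducible monoid with vertex $p$ is a union of cones with vertex $p$ and a single irreducible monoid; if the general $M\in M(Z)_d$ contains no $C_p(Z_j)$, then no component $Z_j$ can lie in a cone factor of $M$ (a cone with vertex $p$ containing $Z_j$ contains $C_p(Z_j)$), so every $Z_j$ lies in the irreducible monoid factor, which is the required hypersurface. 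I recommend replacing your Bertini argument with this observation; it also makes your first dimension count carry the whole weight of the proof. A smaller caveat: the conditions imposed by $Z\setminus Z_j$ on monoids with vertex $p$ need not be independent even for $d\gg0$ (multisecant lines through $p$ obstruct surjectivity of the restriction map), though the defect is of order $d^{r-3}$ and does not affect your asymptotics.
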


\begin{proof} 
  In the notation of Lemma~\ref{lem:moniddim} consider
$M(Z)_d\subset M_d$ the sublinear system of monoids containing $Z$.

By Lemma~\ref{lem:moniddim} we have
$$\dim M(Z)_d\geq
\frac{2d^{r-1}}{(r-1)!}+\frac{(r-1-\delta)d^{r-2}}{(r-2)!}+O(d^{r-3})>0, $$
where $\delta$ is the degree of the
$(r-2)$-dimensional part of $Z$. Note that $r\geq 3$ forces $\delta>0$.

   \begin{claim}\label{cl:monoidexists} For any $j=1,\ldots, s$ and
     $d\gg0$ there is a monoid $B_j\in M(Z_j)_d$ such that
     $$ B_j\not\supset C_p(Z_j).$$
   \end{claim}
   \begin{proof}
     Let $a=\dim Z_j$ and $\pi_j:\p^{r}\rat\p^{a+2}$ be a general linear
     projection, if $a=r-2$ we set $\pi_j=id_{\p^r}$.
     Set $\overline{p}:=\pi_j(p)$,
     $\overline{Z}:=\pi_j(Z_j)$ and $\alpha=\deg\overline{Z}$.

     To prove the claim it is enough to produce a monoid
     in $\p^{a+2}$  with vertex $\overline{p}$, containing
     $\overline{Z}$ and not containing the cone
     $Y:=C_{\overline{p}}(\overline{Z})$.
     
Let $\overline{M}(\overline{Z})_d$ be the linear system of monoids with
vertex $\overline{p}$ in $\p^{a+2}$ and containing $\overline{Z}$.

By Lemma~\ref{lem:moniddim} we have
$$\dim \overline{M}(\overline{Z})_d\geq
\frac{2d^{a+1}}{(a+1)!}+\frac{(a+1-\alpha)d^{a}}{a!}+O(d^{a-1})>0. $$

Let $M'\subset \overline{M}(\overline{Z})_d$ be the sublinear system
of divisors
 containing the cone $Y$. Note that
 $Y$ is a
 hypersurface of degree $\alpha$, i.e. $Y\in|\O_{\p^{a+2}}(\alpha)|$.
 Hence we have  $M'\cong M_{d-\alpha}$ and 
$$
 \dim (M')=  \frac{2(d-\alpha)^{a+1}}{(a+1)!}+\frac{(a+1)(d-\alpha)^{a}}{a!}+O(d^{a-1})=
\frac{2d^{a+1}}{(a+1)!}+\frac{(a+1-2\alpha)d^{a}}{a!}+O(d^{a-1}).
$$
Hence 
$$\dim(\overline{M}(\overline{Z})_d)-\dim(M')=\frac{\alpha
  d^{a}}{a!}+O(d^{a-1})>0, \quad \text{for}\quad d\gg 0.$$
This shows the existence of the required monoids.
   \end{proof}

 Set
 \begin{itemize}
 \item[-] $\pi:\p^{r}\rat\p^{r-1}$ the projection from $p$
 \item[-] $\tilde{Z}:=\pi(Z)$
   \item[-] $\tilde{Z}_j=\pi(Z_j)$, for any irreducible component
     $Z_j\subset Z$.
   \end{itemize}
   By hypothesis for any $j=1,\ldots,h$ the variety  $\tilde{Z}_j$ is
   an irreducible component of degree $\deg Z_j$ 
   of $\tilde{Z}$. In particular $Z_j$ is not contained in the cone
   over $\tilde{Z}\setminus\tilde{Z_j}$ with vertex $p$. Let
   $$\tilde{D}_j\in|\I_{\tilde{Z}\setminus\tilde{Z}_j}(d)|$$
   be a divisor in $\p^{r-1}$ of degree $d$ containing
   $\tilde{Z}\setminus\tilde{Z}_j$, and
   $D_j=C_p(\tilde{D})$ its cone with vertex $p$.

By Claim~\ref{cl:monoidexists}, for $d\gg0$,  we have $D_j+B_j\in M(Z)_{2d}$. Moreover
   $D_j+B_j$ does not contain the cone $C_p(Z_j)$. This shows that the
   general element in $M(Z)_{2d}$ does not contain  $C_p(Z_j)$.
   Hence the general element in $M(Z)_{2d}$ does not contain any of the
   cones $C_p(Z_j)$, for $j=1,\ldots,s$. Note that a reducible monoid
   decomposes in the union of cones, with vertex $p$, and a single
   irreducible monoid. Therefore our construction shows that  the general
   element in $M(Z)_{b}$ contains an irreducible monoid $X$ with
   $X\supset Z$ and $X\not\supset C_p(Z_j)$ for $j=1,\ldots,h$, for $b\gg0$.
\end{proof}

The next step is to produce the required double projections.

\begin{lemma} \label{lem:mp16} Let $Z=\cup_1^hZ_j\subset \P^ r$, with
  $r\geq 3$, be a reduced 
  scheme of positive dimension $n\leqslant r-3$. Let $p_1,p_2\in
  \P^ r\setminus Z$ be distinct points such that the projection of $Z$ from the
  line $\langle p_1,p_2\rangle$ is birational to its image. For
  $d\gg 0$ there is an irreducible monoid of degree $d$ with vertices $p_1$ and
  $p_2$, containing $Z$ but not containing any cone $C_{p_i}(Z_j)$,
  for $i=1,2$ and $j=1,\ldots, h$.
\end{lemma}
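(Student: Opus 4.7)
My plan is to adapt the argument of Lemma~\ref{lem:mp16_1}, replacing the linear system of monoids at a single vertex $p$ with the system $M^{(2)}_d$ of bimonoids with vertices $p_1,p_2$, exploiting the stronger codimension hypothesis $\dim Z\leq r-3$ to compensate for the lower dimensional growth of $M^{(2)}_d$. As a preliminary step I would establish a bimonoid version of Lemma~\ref{lem:moniddim}: in coordinates where $p_1=[0,\ldots,0,1,0]$ and $p_2=[0,\ldots,0,0,1]$, a bimonoid of degree $d$ has equation
$$F_d(x_0,\ldots,x_{r-2})+x_{r-1}G_{d-1}+x_rH_{d-1}+x_{r-1}x_rK_{d-2}=0,$$
so $\dim M^{(2)}_d=\frac{4d^{r-2}}{(r-2)!}+O(d^{r-3})$. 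On the blow-up $V$ of $\P^r$ at $\{p_1,p_2\}$ this corresponds to $|dH-(d-1)E_1-(d-1)E_2|$, and since $p_i\notin Z$ the strict transform $Z^\prime$ satisfies $\o_V(dH-(d-1)E_1-(d-1)E_2)|_{Z^\prime}\cong\o_Z(d)$, giving
$$\dim M^{(2)}(Z)_d\geq\dim M^{(2)}_d-h^0(\o_Z(d))\geq\frac{4d^{r-2}}{(r-2)!}+O(d^{r-3})>0$$
for $d\gg 0$, since $h^0(\o_Z(d))=O(d^{r-3})$ under $\dim Z\leq r-3$.

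Next, mimicking Claim~\ref{cl:monoidexists}, for each $i\in\{1,2\}$ and each $j$ I would construct a bimonoid $N_{i,j}$ containing $Z_j$ but not $C_{p_i}(Z_j)$. Setting $a_j=\dim Z_j$, project from a generic linear subspace $\Lambda$ of dimension $r-a_j-3$ disjoint from $Z_j$ and from $L=\langle p_1,p_2\rangle$ onto $\P^{a_j+2}$; denote the images by $\bar Z_j,\bar p_i,\bar C=C_{\bar p_i}(\bar Z_j)$, so $\bar C$ is a hypersurface of degree $\alpha=\deg Z_j$ and generically $\bar p_{3-i}\notin\bar C$. The decisive observation is that when $\alpha\geq 2$, \emph{no} bimonoid $B$ of $\P^{a_j+2}$ with vertices $\bar p_1,\bar p_2$ can contain the hypersurface $\bar C$: divisibility gives $B=\bar C\cdot B^\prime$ with $\deg B^\prime=d-\alpha$, but the bimonoid condition at $\bar p_{3-i}$ then forces $\mult_{\bar p_{3-i}}(B^\prime)=d-1>d-\alpha$, which is impossible. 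So any bimonoid in $\P^{a_j+2}$ containing $\bar Z_j$ automatically avoids $\bar C$, and its pullback by $\pi_\Lambda$ produces $N_{i,j}$. The degenerate case $\alpha=1$ (where $\bar Z_j$ is linear) is handled by a direct construction.

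The remaining argument closely follows Lemma~\ref{lem:mp16_1}. Birationality of $\pi_L|_Z$ makes $\pi_L(Z_j)$ an irreducible component of $\pi_L(Z)$ distinct from the others, so there exist degree-$d$ hypersurfaces $\tilde D_{i,j}\subset\P^{r-2}$ containing $\pi_L(Z)\setminus\pi_L(Z_j)$ but not $\pi_L(Z_j)$; the cones $D_{i,j}=\pi_L^{-1}(\tilde D_{i,j})$ lie in $M^{(2)}_d$ since $\mult_{p_1}D_{i,j}=\mult_{p_2}D_{i,j}=d$, and $D_{i,j}+N_{i,j}\in M^{(2)}(Z)_b$ contains $Z$ but not $C_{p_i}(Z_j)$ (by irreducibility of $C_{p_i}(Z_j)$ combined with the fact that neither summand contains it). Summing over $(i,j)$, the general element of $M^{(2)}(Z)_b$ avoids every $C_{p_i}(Z_j)$. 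A reducible bimonoid decomposes, by a multiplicity count at $p_1$ and $p_2$, as a sum of cones over $L$, $p_1$, or $p_2$ plus at most one irreducible bimonoid, and the component-tracking argument at the end of Lemma~\ref{lem:mp16_1} then forces each $Z_j$ into this irreducible bimonoid $X$, the required object.

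The main obstacle is the existence step at large $\alpha$: the naive dimension count in $\P^{a_j+2}$ for bimonoids containing $\bar Z_j$ only gives $(4-\alpha)d^{a_j}/a_j!+O(d^{a_j-1})$, which is useless for $\alpha\geq 4$. I would circumvent this by projecting one dimension higher, to $\P^{a_j+3}$ (always possible since $a_j\leq r-3$): there $M^{(2)}(\bar Z_j)_d$ has leading order $O(d^{a_j+1})$ and dominates the $O(d^{a_j})$ contribution of $\bar Z_j$, while containment of $\bar C$ (now of codimension two) is controlled through the ``lines through $\bar p_i$'' analysis underlying Construction~\ref{con:double}, yielding a proper codimension-two condition inside $M^{(2)}(\bar Z_j)_d$.
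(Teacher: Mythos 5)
Your overall architecture (replace the single--vertex system of Lemma~\ref{lem:moniddim} by the system $M^{(2)}_d$ of degree--$d$ hypersurfaces with multiplicity $d-1$ at both $p_1$ and $p_2$, and rerun the $D_{i,j}+N_{i,j}$ argument of Lemma~\ref{lem:mp16_1} inside it) is coherent, and the global count $\dim M^{(2)}(Z)_d\geq \frac{4d^{r-2}}{(r-2)!}+O(d^{r-3})>0$ is correct under $\dim Z\leq r-3$. But the proof hinges entirely on producing, for each pair $(i,j)$, \emph{one} bimonoid $N_{i,j}$ containing $Z_j$ and not containing $C_{p_i}(Z_j)$, and neither of your two routes delivers it. The projection to $\P^{a_j+2}$ gives a clean non--containment statement (the divisibility plus the multiplicity count $\mult_{\bar p_{3-i}}(B')=d-1>d-\alpha$ is fine, since for generic $\Lambda$ one has $\bar p_{3-i}\notin\bar C$), but it gives \emph{no bimonoid at all} once $\alpha\geq 4$: the system has dimension $\sim 4d^{a_j}/a_j!$ while $\bar Z_j$ imposes $\sim\alpha d^{a_j}/a_j!$ conditions, and for a general codimension--two subvariety of large degree these conditions really are independent, so the system is empty. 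The fallback projection to $\P^{a_j+3}$ restores existence but loses the non--containment: there $\bar C$ has codimension two, divisibility is unavailable, and the claim that containing $\bar C$ cuts out a proper subsystem of $M^{(2)}(\bar Z_j)_d$ is precisely the point at issue. Note that it cannot be settled by counting conditions: because a bimonoid already has multiplicity $d-1$ at the vertex $\bar p_i$ of the cone, containing $\bar C$ imposes only $O(d^{a_j})$ conditions (vanishing of the two ``monoid coefficients'' along $\pi_{\bar p_i}(\bar Z_j)$) on a system of dimension $\sim 4d^{a_j+1}/(a_j+1)!$ --- the same order as the conditions imposed by $\bar Z_j$ itself --- so one must prove a lower bound on the rank of a restriction map (say by Serre vanishing on $\pi_{\bar p_i}(\bar Z_j)$), and no such argument appears in your proposal. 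As written, the existence of $N_{i,j}$ is asserted, not proved.

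For comparison, the paper avoids this issue altogether by a different construction: it first reduces to $n=r-3$ by a generic projection (coning the resulting monoid back over the center of projection), then projects $Z$ from each $p_i$ to a hyperplane $H_i$, applies the already--proved single--vertex Lemma~\ref{lem:mp16_1} in $H_i$ to get an irreducible monoid $X_i$ with vertex $\pi_i(p_{3-i})$ containing $Z^i$ and avoiding the cones $C_{\pi_i(p_{3-i})}(Z^i_j)$, and sets $Y_i:=C_{p_i}(X_i)$. Each $Y_i$ is then a degree--$d$ hypersurface with multiplicity $\geq d-1$ at both points, containing $C_{p_i}(Z)$ and containing no component of $C_{p_{3-i}}(Z)$, and a general member of the pencil $\langle Y_1,Y_2\rangle$ does the job. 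This is exactly the missing ingredient your $N_{i,j}$ step needs (one may take $N_{i,j}=Y_{3-i}$ for all $j$). A final minor point: your decomposition of a reducible bimonoid should allow for the case of two distinct non--cone components, one a monoid only at $p_1$ and one only at $p_2$; the component--tracking argument still rules this out, since each $Z_j$ must lie in a component that is a cone at neither vertex, but the statement as you phrased it is not quite accurate.
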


\begin{proof}  
We start the proof with a reduction to
codimension 3 subvarieties.

\begin{claim} It suffices to prove the assertion for $n=r-3$.
\end{claim} 

\begin{proof} [Proof of the Claim] Consider the projection of $\P^ r$
  to $\P^ {n+3}$ from a general linear subspace $\Pi$ of dimension
  $r-n-4$
and call $Z', p'_1,p'_2$ the projections of $Z,p_1,p_2$
respectively. Then $Z'$ is birational to $Z$ and it is still true that
the projection of $Z'$ form $\Span{p_1',p_2'}$ is birational to its image. The
dimension of $Z'$ is $n-3$.

Assume   the assertion holds for $Z', p'_1,p'_2$ and let $F'\subset
\P^ {n+3}$ be an irreducible monoid of degree $d\gg 0$ with vertices
$p'_1,p'_2$ containing $Z'$ but no irreducible components of  $C_{p_i^\prime}(Z')$, for $i=1,2$.
Let $F\subset \P^ r$ be the cone over $F'$ with vertex $\Pi$. Then $F$ is
an irreducible monoid with vertices $p_1,p_2$ containing $Z$ and no
irreducible component of $C_{p_i}(Z)$, for $i=1,2$.
\end{proof}

We can thus assume from now on that $n=r-3$. Fix two hyperplanes $H_1$
and $H_2$, where $p_1\not\in H_1$ and $p_2\not\in H_2$.  Let $Z^1$ and
$Z^2$ be the birational projections of $Z$ from $p_1$ and $p_2$ to $H_1$ and $H_2$,
respectively. Set $p'_{3-i}:=\pi_i(p_{3-i})$, for $i=1,2$. 
By hypothesis the projection of $Z^i$ from $p^\prime_{3-i}$ is
birational, then, by Lemma \ref
{lem:mp16}, for $i=1,2$ there are irreducible monoids $X_i\subset H_i$ with
vertex $p'_{3-i}$ such that:
\begin{itemize}
\item[-] $X_i\supset Z^i$
  \item[-] $X_i$ does not contain any irreducible component of
    $C_{p^\prime_{3-i}}(Z^i)\subset H_i$,
  \end{itemize}
  Set $Y_i:=C_{p_i}(X_i)\subset\p^r$ to be the cone over $X_i$ with vertex $p_i$, then $Y_i$ has the following properties:
  \begin{itemize}
\item[-] $Y_i$ is a cone with vertex $p_i$
  \item[-] $Y_i$ is a monoid with vertex $p_{3-i}$,
  \item[-] $Y_i$ contains the cone $C_{p_i}(Z)$,
    \item[-] $Y_i$ does not contain any irreducible component of the
      cone $C_{p_{3-i}}(Z)$.
    \end{itemize}
  Then a general linear combination of $Y_1$ and $Y_2$ contains an irreducible
  monoid with vertices $p_1$ and $p_2$ containing $Z$ and not
  containing any irreducible component of the cones with vertex $p_1$
  and $p_2$ over $Z$.
\end{proof}

To conclude the proof of Theorem~\ref{th:main} we will 
provide, for $r\geq 3$, the Cremona equivalence via a sequence of double projections
as in \cite[Theorem 1]{MP09} and  \cite[Theorem 2.5]{CCMRZ16}.
 To do this we plan to  use
Lemma~\ref{lem:mp16}. Therefore we need to ensure that projection from
the line connecting the 
vertices of monoids are birational. In \cite[Theorem 2.5]{CCMRZ16}
this was done via \cite[Theorem 1]{CC01}.
Let me spend a few word on this nice Theorem.

Let $X\subset\p^N$ be a non degenerate scheme, the Segre locus of $X$,
${\mathcal S}(X)$, is
the  locus  of points from which $X$ is projected in a non birational way. When $X$ is irreducible and reduced Calabri
and Ciliberto, \cite{CC01}, proved that this locus has irreducible components of
dimension less than $\dim X$, giving a very precise description of
its irreducible components. Unfortunately when $X$ is reducible this
is no more true. As an example of this behavior consider  $X=\cup L_i$ a union of
lines, with  $L_1\cap L_2\neq\emptyset$. Then any point of the plane
spanned by $L_1$ and $L_2$ is in the Segre locus of $X$.

Therefore the Segre locus of reducible schemes
is not well behaved as the one of irreducible varieties and therefore
we cannot adapt \cite[Theorem 1]{CC01} for our purposes and we need
to substitute it with a finer analysis than the one in \cite{MP09} of the individual steps of
the process.
The following is what we need to complete the proof of Theorem~\ref{th:main}.

\begin{theorem}\label{thm:mp} Let  $X, Y\subset \P^r$, with
  $r\geqslant 3$, be two reduced schemes of positive dimension $n<r-1$.  Then
  $X,Y$ are Cremona equivalent  if and only if they are birationally
  equivalent. \end{theorem}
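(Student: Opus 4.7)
The ``only if'' direction is trivial. For the converse I adapt the double-projection argument of \cite{MP09} and \cite{CCMRZ16} to the reducible setting, in the spirit of the discussion after Construction~\ref{con:double}.

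\emph{Setup.} Choose a smooth scheme $Z$ whose connected components correspond bijectively to the irreducible components of $X$ (equivalently, of $Y$), together with birational morphisms $\phi=[\phi_0,\ldots,\phi_r]:Z\to X\subset\p^r$ and $\psi=[\psi_0,\ldots,\psi_r]:Z\to Y\subset\p^r$ induced by base-point-free linear systems $\L_X$ and $\L_Y$. By composing with generic linear automorphisms of $\p^r$, I may assume that any $r$ of the functions $\phi_0,\ldots,\phi_r$ (respectively $\psi_0,\ldots,\psi_r$), possibly augmented by one function from the other tuple, still define a birational map of $Z$ onto its image in $\p^{r-1}$.

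\emph{Iterative swap via double projections.} I construct a sequence of $r+1$ \dJen Cremona transformations, each exchanging one $\phi$-coordinate for one $\psi$-coordinate of the embedding of $Z$. At the typical step $Z$ sits in $\p^r$ via an embedding $[\alpha_0,\ldots,\alpha_r]$ mixing some $\phi$'s and some $\psi$'s. Adjoin a new coordinate $\psi_k$ not yet used to obtain a birational embedding $\eta:Z\rat Z_k\subset\p^{r+1}$, and choose homogeneous coordinates so that the coordinate to be discarded and the newly adjoined one correspond to the coordinate points $p_r$ and $p_{r+1}$. Since $\dim Z=n<r-1$ and the embedded $Z_k\subset\p^{r+1}$ therefore has codimension at least three, Lemma~\ref{lem:mp16} produces an irreducible monoid $M\subset\p^{r+1}$ of sufficiently large degree with vertices $p_r,p_{r+1}$, containing $Z_k$, and avoiding each cone $C_{p_i}(Z_{k,j})$ over an irreducible component of $Z_k$. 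The double projection $\pi_{M,p_{r+1},p_r}$ of Construction~\ref{con:double} is a \dJen Cremona transformation of $\p^r$ which carries $[\alpha_0,\ldots,\alpha_r]$ to the new embedding in which the targeted $\alpha$-coordinate has been replaced by $\psi_k$; the assumption that $M$ avoids the cones $C_{p_i}(Z_{k,j})$ ensures that this map is an isomorphism on the generic point of each irreducible component, so it is a genuine Cremona equivalence. After $r+1$ such swaps every $\phi$-coordinate has been exchanged for a $\psi$-coordinate, and the composition of the \dJen maps is a Cremona transformation of $\p^r$ sending $X$ to $Y$.

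\emph{Main obstacle.} The delicate point is verifying at each step the birationality hypothesis of Lemma~\ref{lem:mp16}: the projection of $Z_k$ from the line $\langle p_r,p_{r+1}\rangle$ to $\p^{r-1}$ must be birational onto its image. For irreducible $Z$ this was handled in \cite{CCMRZ16} by quoting \cite[Theorem 1]{CC01} on the structure of the Segre locus, but as the paper emphasizes this argument breaks down for reducible schemes: already for the union of two incident lines, every point of their plane lies in the Segre locus. The remedy is a componentwise analysis: the locus of lines $\ell\subset\p^{r+1}$ for which either the projection from $\ell$ fails to be birational on some irreducible component $Z_{k,j}$ or identifies two distinct components of $Z_k$ forms a finite union of proper closed subsets of the Grassmannian of lines in $\p^{r+1}$, since $\dim Z_{k,j}<r-1$ for each $j$. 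The generic linear automorphisms chosen in the setup can be tuned simultaneously so that $\langle p_r,p_{r+1}\rangle$ lies outside these bad loci at all $r+1$ steps, and this substitution for the Segre-locus argument of \cite{CC01} is precisely the refinement needed for the reducible case.
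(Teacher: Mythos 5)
Your overall architecture is the same as the paper's: realize $X$ and $Y$ as images of $Z$ under linearly equivalent systems, and swap in the $\psi$-coordinates one at a time via double projections through monoids supplied by Lemma~\ref{lem:mp16}. You also correctly identify where the difficulty lies. But the paragraph that is supposed to resolve that difficulty --- your ``Main obstacle'' --- contains a genuine gap, and it is exactly the point where the paper has to do real work.

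The line $\langle p_r,p_{r+1}\rangle$ from which you must project at step $i$ is not a general line of $\p^{r+1}$, and no amount of ``tuning the generic linear automorphisms of the setup'' makes it one. One of its points is forced to be the coordinate point carrying the newly adjoined function $\psi_{i+1}$, and the other is forced to lie in the linear subspace $\Pi_i$ where all the already-swapped $\psi$-coordinates vanish; otherwise the projection would scramble the $\psi_0,\ldots,\psi_{i+1}$ and destroy the inductive structure. Consequently the family of admissible lines is a proper subvariety of the Grassmannian, and the (true) statement that the bad locus is a proper closed subset of the Grassmannian does not imply that this constrained family meets its complement: a priori the whole family could sit inside the bad locus. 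Concretely, the projection from such a line restricts to $Z$ as the map given by $[\ell_0(\tilde\phi),\ldots,\ell_{r-i-1}(\tilde\phi),\psi_0,\ldots,\psi_i]$, and in the early steps the subsystem $[\psi_0,\ldots,\psi_i]$ alone is only dominant onto a low-dimensional projective space, not birational; whether the residual $\phi$-type coordinates separate the points of its fibers is a statement about the interplay of $\L_X$ and $\L_Y$ on $Z$, not about general position in the ambient space. The paper's Claim~\ref{sub:claim1} handles precisely this: it shows that the general fiber $F$ of the $\psi$-projection sits in a linear space $P_F$ meeting $\Pi_i$ in a hyperplane, has codimension at least $3$ there, and hence is separated by projection from a general point of that hyperplane, together with a tangency check. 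You need to supply an argument of this kind (or an equivalent one); as written, your replacement for the Segre-locus step is an assertion, not a proof.
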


\begin{proof} One direction is clear. Assume that $X$ and $Y$ are birational. Then, as
  observed at the beginning of the section there is a smooth scheme
  $Z$ and two birational morphisms
  $$\phi_{\L_X}:Z\to X\subset\p^r \ \ {\rm and}\ \ 
  \phi_{\L_Y}:Z\to Y\subset\p^r,$$
associated to linearly equivalent linear systems $\L_X\sim\L_Y$.
 
\begin{claim}\label{cl:projok} We may fix basis of $\L_X$ and $\L_Y$ such that the
  projection of  $X=\phi_{\L_X}(Z)$ and $Y=\phi_{\L_Y}(Z)$ from any coordinate subspace of dimension $m$ is
  birational to its image if $r> n+m+1$ and dominant to $\P^{r-m-1}$
  if $r\le n+m+1$.
\end{claim}

\begin{proof}[Proof of the Claim] It is well known that for any
   reduced scheme $X\subset \p^r$ of dimension $n$ the projection from a general linear space of
  dimension less than $r-n-1$ is birational and finite and the projection from a
  general space of dimension $r-n-1$  is  finite. Then it is
  enough to choose a basis of $\L_X$ and $\L_Y$ in such a way that such linear
  spaces  are coordinate subspaces.
\end{proof} 

 We may assume that $\phi_{\L_X}$
  is  given by equations
\[
t\mapsto[\phi_o(t),\ldots,\phi_r(t)]
\]
and $\phi_{\L_Y}$ is given by equations
\[
t\mapsto[\psi_0(t),\ldots,\psi_r(t)]
\]
where $(\phi_j=0), (\psi_h=0)$ are linearly equivalent divisors  on
$Z$ and $t$ varies in a suitable dense open subset of $Z$ intersecting
all irreducible components of $Z$. 

We prove the theorem by constructing a sequence of birational maps
$$\varphi_i: Z\dasharrow Z_i\subset \P^{r+1},$$ 
and projections
$$\eta_i:Z_i\dasharrow X_i,\ \ \nu_i:Z_i\dasharrow X_{i+1},$$
for $0\leqslant i\leqslant  r$, such that:\par
\begin{itemize} 
\item [(a)] $\eta_0\circ \varphi_0=\phi$ and $\nu_r\circ\varphi_r=\psi$, thus $X_0=X$ and $X_{r+1}=Y$;\par
\item [(b)] for $0\leqslant i\leqslant r$, there is a Cremona
  transformation $\omega_i\colon\P^ r\dasharrow \P^ r$, such that
  $\omega_i$ is an isomorphism in a neighborhood of the generic points
  of $X_i$, it satisfies $\omega_i(X_i)=X_{i+1}$
 and
 $\omega_{i|X_i}=\nu_i\circ\eta_i^{-1}$.
 \end{itemize}

 We may summarize the sequence of maps in the following diagram
    \[
 \xymatrix{
   &Z_0\ar_{\eta_0}[dl]\ar^{\nu_0}[dr]&&Z_1\ar_{\eta_1}[dl]\ar^{\nu_1}[dr]\ar@{.>}[rr]&&Z_r\ar^{\nu_r}[dr]&  \\
 X=X_0\ar@{.>}_{\omega_{0|X_0}}[rr]&              &X_1\ar@{.>}_{\omega_{1|X_1}}[rr]&&X_2\ar@{.>}[rr]&&X_{r+1}=Y.}
 \]

 The construction is done recursively.
For $i=0$ we set 
\[
\varphi_0(t)= [\phi_{0}(t),\ldots,\phi_{r}(t), \psi_{0}(t)],
\]
$$\eta_0:= \pi_{[0,\ldots,0,1]|Z_0}$$ the restriction of the projection
from the $(r+1)^{\rm th}$-coordinate point $p_{r+1}:=[0,\ldots,0,1]$ and
$$\nu_0:=\pi_{[0,\ldots,0,1,0]|Z_0}$$
the restriction of the projection
from the $r^{\rm th}$-coordinate point $p_r:=[0,\ldots,0,1,0]$.
By Claim~\ref{cl:projok} the projection from $\langle
p_r,p_{r+1}\rangle$ is birational. Then by Lemma~\ref{lem:mp16} there is a monoid $W\subset\p^{r+1}$
containing $Z_0$ and with vertices in $p_{r+1}$ and $p_r$ such that the
double projection $\pi_{W,p_{r+1},p_{r}}$  is an isomorphism on the generic points of $X_0$ and
realizes a Cremona equivalence $\omega_0:\p^r\to\p^r$ such that $\omega_0(X_0)=X_1$.

Assume $0< i\leqslant r-1$. In order to perform the step from
$i$ to $i+1$, we have to define the maps $\varphi_{i+1}$,
$\eta_{i+1}$, $\nu_{i+1}$ and $\omega_{i+1}$.
From the $i^{\rm th}$-step we have the map
$$\nu_i\circ\varphi_{i}: Z\dasharrow
X_{i+1}\subset \p^ {r}$$ given by 
$$t\mapsto [\tilde \phi_{i,0}(t),\ldots, \tilde
\phi_{i,r-i}(t),\psi_{0}(t),\ldots, \psi_i(t)],
$$
for suitable functions $\tilde{\phi}_{i,j}$.
Then we define $\varphi_{i+1}:Z\dasharrow Z_{i+1}\subset\p^{r+1}$ as
$$t\mapsto [\tilde \phi_{i,0}(t),\ldots, \tilde\phi_{i,r-i}(t),\psi_{0}(t),\ldots, \psi_i(t),\psi_{i+1}(t)].
$$
Note that we added the function $\psi_{i+1}$ on the last coordinate,
therefore 
$$\pi_{[0,\ldots,0,1]}(Z_{i+1})=X_{i+1}.$$ 
Therefore we set
$$\eta_{i+1}:=\pi_{[0,\ldots,0,1]|Z_{i+1}}.$$

To define $\nu_{i+1}$ we need to take a point
$$p\in\Pi_{i}:=  \{x_{r-i+1}=\ldots
=x_{r+1}=0\}\subset\p^{r+1}$$
and prove that the projection from the line $l_p:=\langle
p,[0,\ldots,0,1]\rangle$ restricts to a birational map on $Z_{i+1}$.

\begin{claim}\label{sub:claim1} The projection of $Z_{i+1}$ from a
  general line $l_p$ is birational to its
  image. \end{claim}

\begin{proof} [Proof of the Claim]
  Let  $\pi:=\pi_{[0,\ldots,0,1]}:\p^{r+1}\dasharrow
  \p^{r}$ be the projection from the point $[0,\ldots,0,1]$ and $\tilde{Z}=\pi(Z_{i+1})$.
  By construction $\pi_{|Z_{i+1}}$ is birational.
    Let 
  $$ A:=\{x_{r-i+1}=\ldots
  =x_{r}=0\}\subset\p^{r}$$
  be the linear space we are interested in and $\pi_A:\p^r\rat
  \p^{i-2}$ the linear projection from $A$. To prove the claim we have
  to prove that the projection from a general point of $A$ restricts
  to a birational map onto $\tilde{Z}$.
  
  By Claim~\ref{cl:projok} the restriction  $\pi_{A|\tilde{Z}}$ is  either birational onto the image or
  dominant.   If $\pi$ is birational the claim is clear.

  Assume that $\pi_{A|\tilde{Z}}$ is  dominant, in particular  $i-2\leq
  n$. Let $F\subset
  \tilde{Z}$ be a general fiber of this projection.
We have  $\dim
  F_j\leq n-i+2$, for all irreducible components $F_j\subset
  F$. Moreover  the
  fiber $F$ is contained in a linear space $P_F$ of dimension
  $r-i+2$ and $A\cap P_F$ is a hyperplane.
 Since 
  $$\cod_{P_F} F\geq 3$$
 the general projection from a line in $P_F$ restricts to $F$ as a
 birational map and being $A$ an hyperplane this shows that the
 projection, say $\pi_p$, from a general point $p\in A$ restricts to $F$ as a
 birational map.  Let $x\in F$ be a general point and $r$ the line
 spanned by $p$ and $x$. By construction we have
$$r\cap\tilde{Z}=r\cap
 F=\{x\}.$$
 The scheme $F$ is the general fiber of the
 linear projection $\pi$ and $x\in F$ is a general point, hence  the line $r$ is not tangent to $\tilde{Z}$ in $x$. This
 shows that the morphism $\pi_{p|\tilde{Z}}$ is birational  as required.
  \end{proof}

Let $p\in\Pi_{i}$ be a general point and $\pi_{p}:\p^{r+1}\dasharrow
\p^r$ the projection from $p$.
Set
$$\nu_{i+1}:=\pi_{p|Z_{i+1}}. $$
Thanks to Claim~\ref{sub:claim1} we are in the condition to apply
Lemma~\ref{lem:mp16} and produce  a monoid $W\subset\p^{r+1}$ with the
following properties:
\begin{itemize}
\item[-] $W\supset Z_{i+1}$
\item[-] $p_{r+1}$ and $p$ are vertices of $W$
  \item[-]  the double projection $\pi_{W,p_{r+1},p}$   is an
    isomorphism on the generic points of $X_{i+1}$.
  \end{itemize}
Therefore the  double projection $\pi_{W,p_{r+1},p}$ realizes a
birational map
$\omega_{i+1}:\p^r\rat\p^r$ such that $\omega_{i+1}(X_{i+1})=X_{i+2}$
and $\omega_{i+1}$ is an isomorphism in a neighborhood of the generic
points of $X_{i+1}$.
This proves part (b) of the requirements.

To conclude observe that at the $r^{\rm th}$-step we have
$$\varphi_{r}(t)=[\tilde{\phi}_{r,0}(t),\psi_0(t),\ldots,\psi_r(t)], $$
therefore, thanks to Claim~\ref{cl:projok}, the restriction of the  projection from $p_0:=[1,0,\ldots,0]$ is automatically
birational and the same is true for
the projection from the line
$\langle[1,0,\ldots,0],[0,\ldots,0,1]\rangle$.
Therefore we set 
$\nu_r:=\pi_{[1,0,\ldots,0]|Z_r}\circ\varphi_r$
to fulfill also the last part of requirement (a).

This chain of double projections realizes the Cremona equivalence
between $X$ and $Y$.
\end{proof}
\begin{remark}\label{rem:wild}
  It is interesting to stress the following point. We already observed
  that double projections are associated to \dJen Cremona
  transformations. Therefore  all Cremona equivalences of subvarieties of codimension
  at least $2$ can be realized by transformations in the subgroup generated by \dJen
  transformations. This is particularly interesting when confronted
  with  \cite{BLZ19} where it is proven that the group of
  \dJen map is a proper subgroup of $Cr_n$.  That is to produce all
  Cremona equivalences for codimension $\geq 2$
  reduced schemes we do not even need the full group $Cr_n$.
\end{remark}

As an application of the main result we prove a general
contractibility criteria for sets of rational varieties.

\begin{corollary}
  Let $Z=\cup_1^s T_i\subset \p^r$ be a reduced scheme all of whose irreducible
  components are rational varieties of dimension at most $r-2$. Then there is a
  birational map $\omega:\p^r\dasharrow \p^r$ that contracts $Z$ to a
  set of
  $s$ distinct points.
\end{corollary}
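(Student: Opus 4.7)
The strategy is to first use Theorem~\ref{th:main} to replace $Z$ by an easily-contractible model, and then to realise the contraction as a sequence of \dJen transformations.

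\emph{Reduction.} Fix $s$ distinct points $p_1,\dots,p_s\in\p^r$ in general position and, for each $i$, a rational subvariety $Y_i\subset\p^r$ of dimension $d_i:=\dim T_i$, again in general position; concretely one may take $Y_i$ to be a linear subspace $\Lambda_i\cong\p^{d_i}$, arranged so that $p_j\notin Y_i$ for every pair $i,j$. Each $T_i$ is rational of dimension $d_i$, hence birational to $Y_i$, so the reduced scheme $Y:=\bigcup_i Y_i$ is birational to $Z$ in the sense of Theorem~\ref{th:main}. There is therefore a Cremona transformation $\sigma:\p^r\dasharrow\p^r$ with $\sigma(Z)=Y$, and it suffices to construct a birational self-map $\tau:\p^r\dasharrow\p^r$ contracting $Y$ to $s$ distinct points.

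\emph{Iterative contraction.} Recall from Example~\ref{exa:monoids} that a \dJen map based at a point $p$ with exceptional monoid $B$ of vertex $p$ contracts $B$ to $p$ and fixes $p$. I build $\tau$ step by step. At step $i$, starting from the scheme $\{q_1,\dots,q_{i-1}\}\cup Y_i^{(i-1)}\cup\cdots\cup Y_s^{(i-1)}$ (where the $q_j$ are the points accumulated so far and the $Y_j^{(i-1)}$ are the current birational images of the $Y_j$), a dimension count in the spirit of Lemma~\ref{lem:mp16_1} produces, for $d\gg 0$, an irreducible monoid $B_i$ of degree $d$ with vertex $p_i$ that contains $Y_i^{(i-1)}$, does not contain any $Y_j^{(i-1)}$ with $j>i$, and does not pass through any $q_j$ with $j<i$. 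The associated \dJen map $\omega_i$ contracts $B_i$---and therefore $Y_i^{(i-1)}$---to $p_i$, fixes $p_i$, and is an isomorphism near each $q_j$, sending it to a new point $q_j':=\omega_i(q_j)$. After $s$ steps the composition $\tau:=\omega_s\circ\cdots\circ\omega_1$ contracts $Y$ to the $s$ points obtained by iterating these assignments, which are distinct for generic initial data; the desired map is $\omega:=\tau\circ\sigma$.

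\emph{Main obstacle.} The delicate point is the existence, at each iteration, of the irreducible monoid $B_i$ satisfying one inclusion and several avoidance conditions simultaneously. The linear system of monoids with vertex $p_i$ containing $Y_i^{(i-1)}$ has dimension growing like $\frac{2d^{r-1}}{(r-1)!}$ by Lemma~\ref{lem:moniddim}, whereas each avoidance condition (not containing a given $Y_j^{(i-1)}$ of dimension at most $r-2$, or not passing through a given point) is a closed condition of codimension $O(d^{r-2})$ or less. Combining this estimate with the irreducibility argument in the proof of Lemma~\ref{lem:mp16_1} yields the required monoids and completes the construction.
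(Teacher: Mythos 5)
Your overall strategy (first use the main theorem to replace $Z$ by a union of linear spaces, then contract component by component) starts the same way as the paper, but the iterative contraction step has a genuine gap in the bookkeeping of the accumulated points, and as written the iteration breaks down already at the second step. The problem is this: the \dJen map $\omega_1$ contracts the divisor $B_1$ to the point $p_1$, and $B_1$ is a hypersurface, so it meets every remaining positive-dimensional component $Y_j$ in a nonempty divisor $B_1\cap Y_j$. Any point of $B_1\cap Y_j$ lying off the indeterminacy locus of $\omega_1$ is sent to $p_1$, and hence $q_1=p_1$ lies on the closure $Y_j^{(1)}=\overline{\omega_1(Y_j)}$ for every $j\geq 2$. (Concretely: for two skew lines in $\p^3$, contracting the plane through the first line sends the second line to a conic through $p_1$.) At the next step you demand a monoid $B_2$ with $B_2\supset Y_2^{(1)}$ and $q_1\notin B_2$; since $q_1\in Y_2^{(1)}$ these two conditions are incompatible, so no dimension count can produce $B_2$. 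Dropping the avoidance condition does not help either: if $q_1\in Y_2^{(1)}\subset B_2$ then $q_1$ is also contracted to $p_2$, the images of $T_1$ and $T_2$ collide, and you end up with fewer than $s$ points. This incidence is forced by the geometry, not removable by ``generic initial data''; avoiding it would require, at each step, forcing $B_i\cap Y_j^{(i-1)}$ into the indeterminacy locus of $\omega_i$ and then arguing that the exceptional divisor over it is not mapped to $p_i$ --- an argument you neither state nor supply.

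The paper sidesteps exactly this difficulty by never letting the configuration degenerate: it proceeds by induction on $\dim Z$, and \emph{before each contraction step} it re-invokes Theorem~\ref{thm:mp} to move the current scheme into a prepared position (cones $\langle A_i,p\rangle$ over linear spaces $A_i$ sitting inside a codimension $2$ quadric $Q$, together with the lower-dimensional part placed generically). A single quadro-quadric map with base locus $p\cup Q$ then lowers the dimension of every top-dimensional component by one simultaneously, and the induction restores general position before the next drop. Your argument could be repaired in the same spirit --- after each contraction, apply Theorem~\ref{th:main} to the scheme $\{q_1,\dots,q_{i}\}\cup Y_{i+1}^{(i)}\cup\dots\cup Y_s^{(i)}$ to separate the accumulated points from the remaining components before choosing the next monoid --- but that repositioning step is essential and is missing from your proof.
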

\begin{proof} By Theorem~\ref{thm:mp} there is a birational map
  $\phi:\p^r\dasharrow \p^r$ such that $\phi(Z)$ is a union of $s$
  linear spaces.
  We are therefore left to study the case of linear spaces.
  We prove the claim by induction on the dimension of $Z$.
  Assume $\dim Z=1$ and
  $$Z=\cup_1^h l_i\cup\{p_{h+1},\ldots,p_s\},$$
    with $l_i$ lines.
 Then there is a birational map $\omega:\P^r\rat\p^r$
  such that $\cap_1^h\omega(l_i)=p$ is a general point and $\omega(p_i)=p_i$.
 Consider a quadro-quadric map $\phi:\p^r\rat\p^r$ centered in $p$ and  a general
  codimension $2$  quadric $Q$ intersecting the $h$ lines. Then $\phi$
  contracts the $h$ lines to a set of $h$ points.
  
  Assume that $\dim Z=i$ and, by induction, that the result is true for sets of linear spaces of
  dimension at most $i-1\leq r-3$.

  Set
  $$Z=\cup_1^h M_i\cup Z^\prime,$$
  with $M_i\cong\p^i$ and $\dim Z^\prime\leq i-1$.
  Fix a general point $p\in\P^r$, a general codimension $2$ quadric
  $Q\subset H\subset\P^r$ containing $h$ linear
  spaces $A_i\subset Q$ of dimension $i-1$. By Theorem~\ref{thm:mp}
  there is a Cremona equivalence between $Z$ and
  $$W:=\cup_1^h\langle A_i,p\rangle\cup Z^\prime. $$
  Let $\omega:\P^r\rat\p^r$ be the
quadro-quadric map with base locus $p\cup Q$.
Then by construction
$$\omega_{|\langle A_i,p\rangle}\cong \p^{i-1}. $$
Hence $\omega(W)$ is a union of linear spaces  of dimension at most $i-1$ and we
can conclude by induction that $Z$ is contractible.
\end{proof}
\bibliographystyle{amsalpha}
\bibliography{Biblio}

\end{document}